\newcommand*{\MRref}[2]{ \href{http://www.ams.org/mathscinet-getitem?mr=#1}{MR \textbf{#1}}}
\newtheorem{thm}{Theorem}[section]
\newtheorem{cor}[thm]{Corollary}
\newtheorem{lem}[thm]{Lemma}
\newtheorem{prop}[thm]{Proposition}
\theoremstyle{definition}
\newtheorem{defn}[thm]{Definition}
\theoremstyle{remark}
\numberwithin{equation}{section}
\DeclareMathOperator{\im}{im}
\newcommand*{\Tri}{\mathcal T}
\newcommand*{\Sub}{\mathcal E}
\newcommand*{\Qeq}{\textup{we}_\Sub}
\newcommand*{\Loc}{\Tri/\Sub}
\newcommand*{\Colo}{\Tri/\Sub^\bot}
\newcommand*{\Cat}{\mathcal C}
\newcommand*{\Right}{\mathbb R}
\newcommand*{\Rightco}{\mathbb R^\bot}
\newcommand*{\Wo}[1]{#1 \downarrow \Qeq}
\newcommand*{\Subo}[1]{\Sub \downarrow#1}
\newcommand*{\Tria}[1]{\triangle_\Sub #1}
\newcommand*{\inOb}{\mathrel{\in\in}\nobreak}
\newcommand*{\Q}{\mathbb Q}
\newcommand*{\Ab}{\mathfrak{Ab}}
\newcommand*{\Cst}{\textup{C}^*}
\newcommand*{\K}{\textup K}
\newcommand*{\op}{\textup{op}}
\newcommand*{\KK}{\textup{KK}}
\newcommand*{\id}{\textup{id}}
\newcommand*{\nb}{\nobreakdash}
\newcommand*{\defeq}{\mathrel{\vcentcolon=}}
\newcommand*{\blank}{\textup{\textvisiblespace}}
\begin{document}
\title[Localisation and colocalisation]{Localisation and colocalisation of triangulated categories at thick subcategories}
\author{Hvedri Inassaridze}
\author{Tamaz Kandelaki}
\address{Hvedri Inassaridze, Tamaz Kandelaki: A.~Razmadze Mathematical Institute, M.~Alexidze Street~1, Tbilisi 0193, Georgia}
\email{inassari@gmail.com}
\email{tam.kandel@gmail.com}

\author{Ralf Meyer}
\address{Ralf Meyer: Mathematisches Institut and\\Courant Centre ``Higher order structures,'' Georg-August Universit\"at G\"ottingen, Bunsenstra{\ss}e 3--5, 37073 G\"ottingen, Germany}
\email{rameyer@uni-math.gwdg.de}

\thanks{This research was supported by the Volkswagen Foundation (Georgian--German Non-Commutative Partnership).  The third author was supported by the German Research Foundation (Deutsche Forschungsgemeinschaft (DFG)) through the Institutional Strategy of the University of G\"ottingen.}

\subjclass[2000]{18E30}
\keywords{triangulated category, localisation, derived functor}

\begin{abstract}
  Given a thick subcategory of a triangulated category, we define a colocalisation and a natural long exact sequence that involves the original category and its localisation and colocalisation at the subcategory.  Similarly, we construct a natural long exact sequence containing the canonical map between a homological functor and its total derived functor with respect to a thick subcategory.
\end{abstract}
\maketitle

\section{Introduction}
\label{sec:intro}

Throughout this article, \(\Tri\) is a triangulated category and~\(\Sub\) is a thick subcategory.  The \emph{localisation} of~\(\Tri\) at~\(\Sub\) is a triangulated category \(\Loc\) with a triangulated functor \(L\colon \Tri\to\Loc\) such that \(L|_\Sub \cong0\), and such that any other triangulated functor with this property factors uniquely through~\(L\).  This universal property determines the localisation uniquely up to isomorphism if it exists.  The localisation is constructed in~\cite{Neeman:Triangulated}, disregarding set-theoretic issues.  These also appear in our context.  We must assume that certain colimits exist, which follows, for instance, if the triangulated categories in question are essentially small.

Let \(A[n]\) denote the \(n\)\nb-fold suspension of~\(A\) and let \(\Tri_n(A,B) \defeq \Tri(A,B[n]) \cong \Tri(A[-n],B)\), and similarly for~\(\Loc\).  We are going to embed the localisation functor \(L\colon \Tri\to\Loc\) into a natural long exact sequence
\begin{multline}
  \label{eq:localisation_exact_sequence}
  \dotsb
  \to \Colo_1(A,B)
  \to \Tri_1(A,B)
  \xrightarrow{L} \Loc_1(LA,LB)
  \\\to \Colo_0(A,B)
  \to \Tri_0(A,B)
  \xrightarrow{L} \Loc_0(LA,LB)
  \to \dotsb,
\end{multline}
with \(\Colo_n(A,B) \defeq \Colo(A,B[n]) \cong \Colo(A[-n],B)\).  We call~\(\Colo\) the \emph{colocalisation} of~\(\Tri\) at~\(\Sub\).  The naturality of~\eqref{eq:localisation_exact_sequence} means that \(\Colo(A,B)\) is a bifunctor on~\(\Tri\) contravariant in the first and covariant in the second variable, and that the maps in the above exact sequence are natural transformations.  Using the natural transformation \(\Colo\to\Tri\) in~\eqref{eq:localisation_exact_sequence} and the \(\Tri\)\nb-bimodule structure, \(\Colo\) is equipped with an associative composition product.  It is not a category because it lacks identity morphisms.

Given an Abelian category~\(\Cat\) and a homological functor \(F\colon \Tri\to\Cat\), its \emph{right localisation} at~\(\Sub\) is a homological functor \(\Right F\colon \Tri\to\Cat\) together with a natural transformation \(F\Rightarrow \Right F\), such that~\(\Right F\) vanishes on~\(\Sub\) and such that any other natural transformation \(F\Rightarrow G\) for a homological functor~\(G\) with \(G|_\Sub=0\) factors uniquely through \(F\Rightarrow \Right F\).  Left localisations are defined similarly, but they will not be treated here.

If~\(F\) is a homological functor to the category of Abelian groups, then our construction of the colocalisation functor also provides a long exact sequence
\begin{multline}
  \label{eq:derived_exact_sequence}
  \dotsb
  \to \Rightco F_1(A)
  \to F_1(A)
  \to \Right F_1(A)
  \\\to \Rightco F_0(A)
  \to F_0(A)
  \to \Right F_0(A)
  \to \dotsb,
\end{multline}
which is functorial in~\(A\) and~\(F\); that is, a natural transformation of homological functors \(F\Rightarrow F'\) and an arrow \(A\to A'\) induce a chain map from~\eqref{eq:derived_exact_sequence} for \(F_*(A)\) to~\eqref{eq:derived_exact_sequence} for \(F'_*(A')\) in a functorial manner.

We call a homological functor \(F\colon \Tri\to\Ab\) \emph{local} if the natural map \(F\Rightarrow\Right F\) is invertible, and \emph{colocal} if the map \(\Rightco F\Rightarrow F\) is invertible.  As we shall explain, local homological functors correspond to homological functors \(\Loc\to\Ab\), while colocal homological functors correspond to homological functors \(\Sub\to\Ab\).  Thus~\eqref{eq:derived_exact_sequence} decomposes homological functors on~\(\Tri\) into homological functors on \(\Loc\) and~\(\Sub\).

Dual statements apply to cohomological functors \(\Tri\to\Ab\) because we may view them as homological functors \(\Tri^\op\to\Ab\) on the opposite category~\(\Tri^\op\).

The exact sequence~\eqref{eq:localisation_exact_sequence} is easy to construct in the special situation of a \emph{complementary pair of thick subcategories} in the notation of~\cite{Meyer-Nest:BC}.  (In~\cite{Neeman:Triangulated}, this is called a Bousfield localisation.)  Recall that~\((\Sub^\bot,\Sub)\) is a complementary pair of thick subcategories if \(\Tri(\Sub^\bot,\Sub)=0\) and for any object~\(B\) of~\(\Tri\) there is an exact triangle
\begin{equation}
  \label{eq:triangle_complementary}
  L^\bot B \to B \to LB \to L^\bot B[1]
\end{equation}
with \(LB\inOb\Sub^\bot\), \(L^\bot B\inOb\Sub\).  Here we write~\(\inOb\) for objects of categories, as opposed to~\(\in\) for morphisms of categories.  The triangle~\eqref{eq:triangle_complementary} is unique up to canonical isomorphism and depends functorially on~\(B\).  Since
\begin{alignat*}{2}
  \Loc(A,B) &\cong \Tri(A,LB) \cong \Tri(LA,LB),&\qquad
  \Right F(B) &\cong F(LB),\\\shortintertext{we may define}
  \Colo(A,B) &\defeq \Tri(A,L^\bot B),&\qquad
  \Rightco F(B) &\defeq F(L^\bot B).
\end{alignat*}
Then the exact sequences \eqref{eq:localisation_exact_sequence} and~\eqref{eq:derived_exact_sequence} follow by applying the homological functors \(\Tri(\blank,B)\) and~\(F\) to the exact triangle~\eqref{eq:triangle_complementary}.

Without~\(\Sub^\bot\), we cannot single out a unique exact triangle as in~\eqref{eq:triangle_complementary}.  Instead, we construct the localisation and the colocalisation as colimits of \(\Tri(A,D)\) and \(\Tri(A,C)\), where \(C\) and~\(D\) are indexed by the category of exact triangles \(C\to B\to D\to C[1]\) with \(C\inOb\Sub\).  The main issue is to prove that this category of exact triangles is filtered, so that the colimit preserves exact sequences.

A typical example where the general construction of the colocalisation is useful is considered in~\cite{Inassaridze-Kandelaki-Meyer:Finite_Torsion_KK}.  There \(\Tri(A,B) = \KK(A,B)\) is Kasparov's bivariant \(\K\)\nb-theory for \(\Cst\)\nb-algebras \(A\) and~\(B\), and \(\Loc(A,B) = \KK(A,B) \otimes \Q\).  The colocalisation provides a torsion variant of \(\KK\)-theory.  It can be shown that \(\Loc\) is the localisation of~\(\KK\) at a suitable thick subcategory.  It is unclear, however, whether this is part of a complementary pair of thick subcategories.

This article is organised as follows.  Section~\ref{sec:localisation} recalls the well-known construction of the localisation and its basic properties.   Similar techniques are used in Section~\ref{sec:colocalisation} to define and study colocalisations.  Section~\ref{sec:exact_sequence} establishes the exact sequences \eqref{eq:localisation_exact_sequence} and~\eqref{eq:derived_exact_sequence}.  Each section uses a particular filtered category.

\section{Localisation}
\label{sec:localisation}

We fix a triangulated category~\(\Tri\) and a thick subcategory~\(\Sub\) throughout.

The \emph{cone} of a morphism \(f\colon A\to B\) in~\(\Tri\) is the object~\(C\) in an exact triangle
\[
A\xrightarrow{f} B\to C \to A[1]
\]
A morphism~\(f\) in~\(\Tri\) is an \emph{\(\Sub\)\nb-weak equivalence} if its cone belongs to~\(\Sub\).  Let \(\Qeq(A,B)\) be the set of \(\Sub\)\nb-weak equivalences from~\(A\) to~\(B\).

Recall that an object of~\(\Tri\) becomes zero in~\(\Loc\) if and only if it belongs to~\(\Sub\), and an arrow in~\(\Tri\) becomes invertible in~\(\Loc\) if and only if it is an \(\Sub\)\nb-weak equivalence (see~\cite{Neeman:Triangulated}).  Thus \(\Qeq\) is closed under composition and contains all isomorphisms in~\(\Tri\).  Even more, \(\Qeq\) has the two-out-of-three and even the two-out-of-six property:

\begin{lem}
  \label{lem:two_six}
  Let \(f\), \(g\) and~\(h\) be composable morphisms in~\(\Tri\).  If \(fg\) and \(gh\) are \(\Sub\)\nb-weak equivalences, so are \(f\), \(g\), \(h\), and \(fgh\).

  If two of \(f\), \(g\), and \(fg\) are \(\Sub\)\nb-weak equivalences, so is the third.
\end{lem}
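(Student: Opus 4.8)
The plan is to reduce both assertions to elementary facts about isomorphisms in an arbitrary category. As recalled above, a morphism of~\(\Tri\) lies in~\(\Qeq\) precisely when the localisation functor \(L\colon\Tri\to\Loc\) maps it to an isomorphism; and since~\(L\) is a functor, \(L(fg)=L(f)L(g)\) and \(L(gh)=L(g)L(h)\). So it suffices to establish the following two purely categorical statements and then apply them in~\(\Loc\) to \(L(f)\), \(L(g)\), \(L(h)\): (a)~if \(f\), \(g\), \(h\) are composable morphisms in any category such that \(fg\) and \(gh\) are invertible, then \(f\), \(g\), \(h\) and \(fgh\) are invertible; and (b)~if two of \(f\), \(g\), \(fg\) are invertible, then so is the third. (Although the morphism classes of~\(\Loc\) may fail to be sets, \(\Loc\) is still a genuine category, so this reasoning is legitimate.)

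For~(a), write \(h\colon A\to B\), \(g\colon B\to C\), \(f\colon C\to D\), so that \(gh\colon A\to C\) and \(fg\colon B\to D\). Then \(h\,(gh)^{-1}\colon C\to B\) is a right inverse for~\(g\) and \((fg)^{-1}f\colon C\to B\) is a left inverse for~\(g\); a morphism possessing both a left and a right inverse is invertible, and its two one-sided inverses agree, so~\(g\) is invertible. Consequently \(h=g^{-1}(gh)\), \(f=(fg)g^{-1}\) and \(fgh=(fg)h\) are composites of invertible morphisms, hence invertible. This one-sided-inverse argument is the only step that is not completely immediate, and it is exactly where the two-out-of-six property goes beyond two-out-of-three.

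Statement~(b) is routine: if \(f\) and~\(g\) are invertible then so is \(fg\); if \(fg\) and~\(g\) are invertible then \(f=(fg)g^{-1}\) is invertible; and if \(fg\) and~\(f\) are invertible then \(g=f^{-1}(fg)\) is invertible.

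One could, alternatively, prove the two-out-of-three part directly inside~\(\Tri\) without invoking~\(\Loc\): the octahedral axiom produces, for composable morphisms \(g\) and~\(h\), an exact triangle whose three vertices are the cones of \(h\), \(gh\) and~\(g\), and a thick subcategory is closed under passing to a third vertex of an exact triangle when the other two vertices lie in it. However, the route through~\(\Loc\) handles both statements uniformly with no triangulated input at all, so that is the argument I would record.
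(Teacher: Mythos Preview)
Your argument is correct and matches the paper's proof almost verbatim: both pass to~\(\Loc\) via~\(L\), observe that \(L(g)\) acquires a left inverse from \((fg)^{-1}f\) and a right inverse from \(h(gh)^{-1}\), hence is invertible, and then deduce invertibility of the remaining maps. The only cosmetic difference is that the paper derives the two-out-of-three statement from the two-out-of-six statement by letting one of \(f,g,h\) be an identity, whereas you verify it directly; both are immediate.
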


\begin{proof}
  The assumption means that \(L(fg)\) and~\(L(gh)\) are invertible in~\(\Loc\).  This implies that~\(L(g)\) has both a left and a right inverse, so that \(L(g)\) is invertible.  Then \(L(f)\) and \(L(h)\) must also be invertible, and finally \(L(fgh)\) is invertible.  The second statement follows from the first one by considering the special cases where one of \(f\), \(g\) or~\(h\) is an identity (see also \cite{Neeman:Triangulated}*{Lemma 1.5.6} for a more direct proof of the two-out-of-three property).
\end{proof}

The construction of the localisation~\(\Loc\) in~\cite{Neeman:Triangulated} shows that its underlying category is isomorphic to the localisation of~\(\Tri\) at~\(\Qeq\) in the category theoretic sense of~\cite{Gabriel-Zisman:Fractions}.  To prepare for this, it is shown that~\(\Qeq\) allows a \emph{calculus of left fractions} in the sense of \cite{Gabriel-Zisman:Fractions}*{\S2.2}, that is, it satisfies the following conditions (LF1)--(LF3):

\begin{lem}
  \label{lem:trianleft}
  Let~\(\Sub\) be a thick subcategory of a triangulated category~\(\Tri\).
  \begin{enumerate}[label=\textup{(LF\arabic{*})}]
  \item Identities belong to~\(\Qeq\), and~\(\Qeq\) is closed under composition.
  \item For each pair of morphisms \(B\xleftarrow{s}A\xrightarrow{f}C\) with \(s\in\Qeq\) there are \(g\in\Tri\) and \(t\in\Qeq\) with \(gs=tf\), that is, there is a commuting square
    \[
    \xymatrix{
      A\ar[r]^f\ar[d]_s^\sim& C\ar[d]^t_\sim\\
      B\ar[r]^g &D.}
    \]
  \item If \(f,g\colon A\rightrightarrows B\) are parallel morphisms in~\(\Tri\) and \(s\colon A'\rightarrow A\) is in~\(\Qeq\) and satisfies \(fs=gs\), then there is \(t\colon B\rightarrow B'\) in~\(\Qeq\) with \(tf=tg\).
  \end{enumerate}
\end{lem}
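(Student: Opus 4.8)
The plan is to verify the three conditions in turn. Condition (LF1) is immediate: the cone of an identity morphism is a zero object, which belongs to~\(\Sub\) because~\(\Sub\) is thick, and the closure of~\(\Qeq\) under composition has already been recorded above (it also follows from Lemma~\ref{lem:two_six}). For (LF2) and (LF3) I would use the same device — complete the given \(\Sub\)\nb-weak equivalence to an exact triangle whose third vertex lies in~\(\Sub\), and then extract the required factorisation from the long exact sequence obtained by applying a representable functor on~\(\Tri\) to that triangle.

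For (LF2), given \(B\xleftarrow{s}A\xrightarrow{f}C\) with \(s\in\Qeq\), I would first embed~\(s\) into an exact triangle \(A\xrightarrow{s}B\xrightarrow{p}E\xrightarrow{q}A[1]\), noting that \(E\inOb\Sub\) because \(s\in\Qeq\). Then I would embed the composite \(f\circ q[-1]\colon E[-1]\to C\) into an exact triangle \(E[-1]\xrightarrow{f\circ q[-1]}C\xrightarrow{t}D\to E\); its third vertex is~\(E\), so \(t\in\Qeq\). Finally, \(t\circ f\circ q[-1]=0\) because these are consecutive arrows of a triangle, and since \(E[-1]\to A\xrightarrow{s}B\) are the first two arrows of an exact triangle (a rotation of the one on~\(s\)), the sequence \(\Tri(B,D)\to\Tri(A,D)\to\Tri(E[-1],D)\) is exact; hence \(t\circ f\) factors as \(g\circ s\) for some \(g\colon B\to D\). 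The square with sides \(s,f,g,t\) then commutes, which is exactly (LF2). One could instead present~\(D\) as a homotopy pushout of \(B\xleftarrow{s}A\xrightarrow{f}C\), but the construction above avoids the octahedral axiom.

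For (LF3), given parallel \(f,g\colon A\rightrightarrows B\) and \(s\colon A'\to A\) in~\(\Qeq\) with \(fs=gs\), I would again complete~\(s\) to an exact triangle \(A'\xrightarrow{s}A\xrightarrow{p}E\xrightarrow{q}A'[1]\) with \(E\inOb\Sub\). Then \((f-g)\circ s=0\), so exactness of \(\Tri(E,B)\to\Tri(A,B)\to\Tri(A',B)\) gives a factorisation \(f-g=h\circ p\) with \(h\colon E\to B\). Embedding~\(h\) into an exact triangle \(E\xrightarrow{h}B\xrightarrow{t}B'\to E[1]\), its third vertex is \(E[1]\), which lies in~\(\Sub\) since~\(\Sub\) is closed under suspension; thus \(t\in\Qeq\), and \(t\circ h=0\) forces \(t\circ(f-g)=0\), i.e.\ \(tf=tg\).

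The one step with genuine content is the construction of~\(D\) in (LF2): one has to hit upon coning off \(f\circ q[-1]\), after which both verifications — that~\(t\) is an \(\Sub\)\nb-weak equivalence and that \(tf\) descends along~\(s\) — are forced by the shapes of the two triangles involved. Beyond the standard triangulated axioms, the argument uses only that~\(\Sub\) is a triangulated subcategory (closed under suspension and containing a zero object); closure under extensions, needed for the closure of~\(\Qeq\) under composition, plays no further role here.
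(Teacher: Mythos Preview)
Your proof is correct. For (LF1) and (LF3) you do exactly what the paper does: (LF1) is dismissed as obvious, and your argument for (LF3) is word-for-word the paper's --- complete~\(s\) to a triangle with third vertex in~\(\Sub\), factor \(f-g\) through that vertex, and cone off the resulting map into~\(B\).

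The genuine difference is in (LF2). The paper does not argue at all: it cites \cite{Neeman:Triangulated}*{Lemma 1.5.8} and invokes the Octahedral Axiom, which is the standard route --- one forms the homotopy pushout \(D\) of \(B\xleftarrow{s}A\xrightarrow{f}C\) and uses the octahedron to identify the cone of \(t\colon C\to D\) with the cone~\(E\) of~\(s\). You instead \emph{define}~\(D\) as the cone of \(f\circ q[-1]\colon E[-1]\to C\), so that the cone of~\(t\) is~\(E\) by construction, and then recover~\(g\) from the long exact sequence for \(\Tri(\blank,D)\) applied to the rotated triangle \(E[-1]\to A\xrightarrow{s}B\). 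This is a clean trick: it makes the argument self-contained and, as you note, bypasses the Octahedral Axiom entirely, at the modest cost that the map~\(g\) is produced non-constructively by exactness rather than as an explicit component of a pushout. Either approach is fine here; yours is arguably the more elementary.
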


\begin{proof}
  (LF1) is obvious, and (LF2) follows from \cite{Neeman:Triangulated}*{Lemma 1.5.8} and the Octahedral Axiom.  We only have to prove (LF3).  We construct the following commuting diagram:
  \[
  \xymatrix{
    A'\ar[r]^s_\sim \ar[dr]_0&
    A\ar[r]\ar[d]^{f-g}&
    C\ar[d]\\
    &B\ar @{=}[r]&B\ar[d]^t_\sim&\\
    &&B'
  }
  \]
  First we embed \(s\colon A'\to A\) in an exact triangle \(A'\xrightarrow{s} A\to C \to A'[1]\).  Then \(C\inOb\Sub\) because \(s\in\Qeq\).  Since \((f-g)\circ s=0\) and \(\Tri(\blank,B)\) is cohomological, we may factor \(f-g\) through the map \(A\to C\).  We embed the resulting map \(C\to B\) into an exact triangle \(C\to B\xrightarrow{t} B'\to C[1]\).  Since \(C\inOb\Sub\), \(t\in\Qeq\).  By construction \(t\circ (f-g)=0\).
\end{proof}

Dually, there is a calculus of right fractions for the same reasons.

It is useful for us to rewrite \(\Loc(A,B)\) as a filtered colimit.  For a fixed object \(B\inOb\Tri\), we consider the category~\(\Wo{B}\) whose objects are arrows \(B\to C\) in~\(\Qeq\) and whose arrows are commuting triangles:
\[
\xymatrix@C-1.5em{C\ar[rr]^r&&C'\\
  &B\ar[lu]^s\ar[ru]_t&}
\]
where \(s\) and~\(t\) are morphisms in~\(\Qeq\) and~\(r\) is a morphism in~\(\Tri\).  By the two-out-of-three property, we also get \(r\in\Qeq\).

Recall that a category is called \emph{filtered} if it satisfies the following conditions:
\begin{enumerate}[label=\textup{(F\arabic{*})}]
\item for two objects \(E\) and~\(D\) there are an object~\(H\) and morphisms \(f\colon E\to H\) and \(g\colon D\to H\);
\item for a pair of parallel morphisms \(h,j\colon A\rightrightarrows B\) there is a morphism \(l\colon B\to F\) with \(lh=lj\).
\end{enumerate}

Since~\(\Qeq\) allows a calculus of left fractions by Lemma~\ref{lem:trianleft}, the category~\(\Wo{B}\) is filtered for each~\(B\).  (The proof that (LF1)--(LF3) imply (F1) and~(F2) is an easy exercise.)

The map \(s\mapsto \Tri(A,C)\) for \((s\colon B\to C) \inOb \Wo{B}\) defines a covariant functor from \(\Wo{B}\) to the category of sets.  The colimit of this diagram yields the space of arrows \(\Loc(A,B)\) in the localisation of~\(\Tri\) at~\(\Qeq\):
\begin{equation}
  \label{eq:localisation}
  \Loc(A,B) \defeq \varinjlim_{(s\colon B\to C)\inOb (\Wo{B})} \Tri(A,C).
\end{equation}
The existence of these colimits for all \(A\) and~\(B\) ensures that the localisation exists.  Recall that any small diagram of Abelian groups has a colimit.  If~\(\Tri\) is (essentially) small, then so is~\(\Wo{B}\), and hence the colimit in~\eqref{eq:localisation} exists.  There are many examples where~\(\Tri\) is large but the colimits in~\eqref{eq:localisation} exist for different reasons.

For instance, if~\(\Sub\) is part of a complementary pair of thick subcategories \((\Sub^\bot,\Sub)\) in the notation of~\cite{Meyer-Nest:BC}, then the arrow \(B\to LB\) in~\eqref{eq:triangle_complementary} is a final object of~\(\Wo{B}\).  Thus the colimit above exists and reduces to \(\Loc(A,B) \cong \Tri(A,LB)\).  Conversely, if~\(\Wo{B}\) has a final object for each~\(B\), then \((\Sub^\bot,\Sub)\) is a complementary pair of thick subcategories.

\begin{lem}
  \label{lem:Wo_B_functor}
  The inductive system \((C)_{s\colon B\to C\inOb\Wo{B}}\) depends functorially on~\(B\).
\end{lem}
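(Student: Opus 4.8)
The statement is to be read in the following sense. Inductive systems in~\(\Tri\) — functors from small filtered categories to~\(\Tri\) — form a category in which a morphism from \((C_i)_{i\inOb\mathcal I}\) to \((D_j)_{j\inOb\mathcal J}\) is an element of \(\varprojlim_{i\inOb\mathcal I}\varinjlim_{j\inOb\mathcal J}\Tri(C_i,D_j)\); the colimit functor is functorial on this category wherever it exists.  The plan is to exhibit \(B\mapsto (C)_{(s\colon B\to C)\inOb\Wo{B}}\) as a functor from~\(\Tri\) into this category.  Applying the colimit over~\(\Wo{B}\) will then make~\eqref{eq:localisation} functorial in~\(B\), and the same argument, run with the filtered categories appearing in the later sections, will yield the functoriality of the colocalisation and of~\(\Right F\).

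So let \(\beta\colon B\to B'\) be a morphism in~\(\Tri\).  For each object \((s\colon B\to C)\) of~\(\Wo{B}\) I would use~(LF2) to choose a commuting square with vertical maps \(s\in\Qeq\) on the left and \(s'\in\Qeq\) on the right, top row~\(\beta\) and bottom row \(g\in\Tri\), so that \((s'\colon B'\to C')\) is an object of~\(\Wo{B'}\); the induced morphism should send \((s\colon B\to C)\) to the class~\(\beta_s\) of~\(g\) in \(\varinjlim_{(s'\colon B'\to C')\inOb\Wo{B'}}\Tri(C,C')\).  Since~(LF2) only produces \emph{some} such square, this assignment a priori depends on a choice, and the substance of the lemma is that it does not.

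The key step is therefore the independence of~\(\beta_s\) of the chosen square.  Given a second square with data \(s_1'\colon B'\to C_1'\), \(g_1\colon C\to C_1'\), I would use (F1) for the filtered category~\(\Wo{B'}\) to map \(s'\) and~\(s_1'\) into a common object \((s_2'\colon B'\to C_2')\); the two resulting composites \(C\to C_2'\) then agree after precomposition with~\(s\), both being \(s_2'\circ\beta\).  Now (LF3) applied to this parallel pair and to \(s\in\Qeq\) provides an \(\Sub\)\nb-weak equivalence \(t\) coequalising them, and \(t\) composed with the weak equivalence \(B'\to C_2'\) is again in~\(\Qeq\) by~(LF1), so its target underlies an object of~\(\Wo{B'}\) and witnesses that the classes of \(g\) and~\(g_1\) coincide in the colimit.

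Granting this, everything else is formal.  Compatibility of the family \((\beta_s)_{s\inOb\Wo{B}}\) over a morphism \(r\) of~\(\Wo{B}\) follows because the chosen square at the target of~\(r\), precomposed with~\(r\), is again a square of the required type at the source of~\(r\), so the independence step identifies the two classes.  The identity law uses the square with \(s'=s\), \(g=\id_C\), and the composition law \((\beta'\beta)_*=\beta'_*\circ\beta_*\) follows by stacking the chosen squares and invoking independence once more.  I expect the independence step to be the only real obstacle: it is precisely there that one needs the full calculus of left fractions, Lemma~\ref{lem:trianleft}, and not merely the filteredness of the index categories.
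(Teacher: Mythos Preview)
Your argument is correct and is exactly the content of the paper's first paragraph, which simply asserts that ``it follows from the calculus of left fractions'' that the choices do not matter; you have spelled out precisely how (LF2), filteredness of \(\Wo{B'}\), and (LF3) combine to give well-definedness, compatibility, and functoriality.  The paper also offers a second, slicker route that you may find worth noting: morphisms of inductive systems \((C_s)\to (C'_{s'})\) are, by a Yoneda-type argument, the same as natural transformations \(\Tri\bigl(X,(C_s)\bigr)\Rightarrow\Tri\bigl(X,(C'_{s'})\bigr)\), and since the colimit of \(\Tri(X,\blank)\) over \(\Wo{B}\) is \(\Loc(X,B)\), the required functoriality is equivalent to the existence of a natural composition \(\Loc(X,B)\times\Tri(B,B')\to\Loc(X,B')\), which is immediate from the fact that \(\Loc\) is a category receiving a functor from~\(\Tri\).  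Your approach has the advantage of being self-contained within the calculus of fractions, without invoking the already-constructed category structure on~\(\Loc\); the paper's alternative is shorter but relies on that prior construction.
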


\begin{proof}
  Let \(f\colon B\to B'\) be a morphism in~\(\Tri\).  By~(LF1), we associate to an element \(s\colon B\to C\) an element \(t\colon B'\to C'\) of~\(\Wo{B'}\) and a map \(g\colon C\to C'\) with \(gs=tf\).  It follows from the calculus of left fractions that this defines a morphism of inductive systems that does not depend on our auxiliary choices of \(t\) and~\(g\).

  Alternatively, morphisms of inductive systems \((C_s)\to (C'_{s'})\) correspond bijectively to natural transformations \(\Tri(X,(C_s)) \to \Tri(X,(C'_{s'}))\).  Since \(\Tri(X,(C_s)) = \Loc(X,B)\), our statement is equivalent to the existence of a natural composition product
  \[
  \Loc(X,B)\times\Tri(B,B') \to \Loc(X,B').
  \]
  This exists because~\(\Loc\) is a category and the maps \(\Tri\to\Loc\) are a functor.
\end{proof}

\begin{defn}
  \label{def:localisation_functor}
  Let \(F\colon \Tri\to\Ab\) be a homological functor to the category of Abelian groups.  Its \emph{right derived functor} or \emph{right localisation} \(\Right F\colon \Tri\to\Ab\) is defined by
  \[
  \Right F(B) \defeq \varinjlim_{(s\colon B\to C)\inOb\Wo{B}} F(C).
  \]
  This is a functor by Lemma~\ref{lem:Wo_B_functor}.  The maps \(s\colon B\to C\) provide a natural transformation \(F\Rightarrow \Right F\).
\end{defn}

\begin{thm}
  \label{thm:localisation_homological}
  Let~\(\Tri\) be a triangulated category, \(\Sub\) a thick subcategory, and \(F\colon \Tri \to \Ab\) a homological functor.  Then the right localisation \(\Right F\colon \Tri \to \Ab\) is homological.
\end{thm}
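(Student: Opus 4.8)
The plan is to verify the defining property of a homological functor directly: for every exact triangle $A \xrightarrow{u} B \xrightarrow{v} C \xrightarrow{w} A[1]$ in~$\Tri$ I will show that the sequence $\Right F(A) \xrightarrow{\Right F(u)} \Right F(B) \xrightarrow{\Right F(v)} \Right F(C)$ is exact at~$\Right F(B)$; the full long exact sequence then follows by rotating the triangle. The inclusion $\im\Right F(u)\subseteq\ker\Right F(v)$ is immediate: $\Right F$ is a functor by Lemma~\ref{lem:Wo_B_functor} and is additive (this is routine, being a consequence of the calculus of left fractions, the additivity of~$F$, and the stagewise computation of sums in a filtered colimit of Abelian groups), so $\Right F(v)\circ\Right F(u)=\Right F(vu)=\Right F(0)=0$.

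The content is the reverse inclusion. Fix $\xi\in\ker\Right F(v)$. Since $\Right F(B)=\varinjlim_{\Wo{B}}F$ and $\Wo{B}$ is filtered, I can represent~$\xi$ by some $\xi'\in F(B')$ for an object $(b\colon B\to B')\inOb\Wo{B}$. The functoriality map~$\Right F(v)$ of Lemma~\ref{lem:Wo_B_functor} is computed by choosing, via~\textup{(LF2)}, an object $(c_0\colon C\to C_0)\inOb\Wo{C}$ and a morphism $\tilde v\colon B'\to C_0$ with $\tilde v b=c_0 v$, and it sends the class of~$\xi'$ to the class of $F(\tilde v)(\xi')$. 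As this class vanishes in the filtered colimit $\Right F(C)=\varinjlim_{\Wo{C}}F$, it already vanishes at a finite stage: there is $r\colon C_0\to C'$ with $rc_0\in\Qeq$ and $F(r\tilde v)(\xi')=0$. Putting $c\defeq rc_0\in\Qeq$ and $v'\defeq r\tilde v$, I then have $v'b=cv$ and $F(v')(\xi')=0$.

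Next I embed~$v'$ into an exact triangle $A'\xrightarrow{u'}B'\xrightarrow{v'}C'\xrightarrow{w'}A'[1]$ and, rotating this triangle and the given one so that $v$ and~$v'$ become first maps, apply the morphism-of-triangles axiom to the commuting square with legs $b$ and~$c$. This produces a morphism $a\colon A\to A'$ such that $(a,b,c)$ is a morphism of exact triangles from $A\xrightarrow{u}B\xrightarrow{v}C\xrightarrow{w}A[1]$ to $A'\xrightarrow{u'}B'\xrightarrow{v'}C'\xrightarrow{w'}A'[1]$; in particular $u'a=bu$. The triangulated functor $L\colon\Tri\to\Loc$ inverts $b$ and~$c$, and a morphism of exact triangles with two invertible legs has an invertible third leg, so $L$ inverts~$a$; since $\Qeq$ is exactly the class of arrows inverted by~$L$, this means $(a\colon A\to A')\inOb\Wo{A}$. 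Finally, $F$ is homological and $A'\xrightarrow{u'}B'\xrightarrow{v'}C'$ lies in an exact triangle, so $\xi'\in\ker F(v')=\im F(u')$; choose $\alpha'\in F(A')$ with $F(u')(\alpha')=\xi'$. Then $\alpha'$ represents a class in $\Right F(A)=\varinjlim_{\Wo{A}}F$, and since $u'a=bu$ the morphism~$u'$ together with the weak equivalence~$b$ is an admissible choice in the recipe of Lemma~\ref{lem:Wo_B_functor} for~$\Right F(u)$; hence $\Right F(u)$ sends this class to the class of $F(u')(\alpha')=\xi'$ over $(b\colon B\to B')$, which is~$\xi$. Thus $\xi\in\im\Right F(u)$.

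The step I expect to be the real obstacle is producing, out of an element killed by~$\Right F(v)$, an honest morphism of exact triangles all three of whose legs are $\Sub$\nb-weak equivalences. This needs, on the one hand, careful bookkeeping through two filtered colimits in order to replace the opaque condition ``$\Right F(v)(\xi)=0$'' by the concrete relation $F(v')(\xi')=0$, and, on the other hand, the triangulated-category fact that a commuting square can be filled to a morphism of exact triangles whose remaining leg is again inverted by the localisation functor; the latter is available precisely because $\Qeq$ coincides with the class of arrows that~$L$ inverts.
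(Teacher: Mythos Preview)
Your proof is correct and follows essentially the same route as the paper's: represent an element of $\ker\Right F(v)$ at a finite stage, use the filtered colimit to arrange $F(v')(\xi')=0$ for a commuting square over~$v$, complete to a morphism of exact triangles, and use homologicality of~$F$ to lift. The only cosmetic difference is that you deduce $a\in\Qeq$ via the triangulated $5$-lemma in~$\Loc$ (two legs inverted by~$L$ force the third), whereas the paper phrases the same conclusion as a consequence of the Octahedral Axiom.
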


\begin{proof}
  Let \(B'\xrightarrow{f} B\xrightarrow{g} B''\to B'[1]\) be an exact triangle.  We must show that
  \[
  \Right F(B')
  \xrightarrow{\Right F(f)} \Right F(B)
  \xrightarrow{\Right F(g)}\Right F(B'')
  \]
  is an exact sequence of groups.  Functoriality of \(\Right F\) already implies \(\im \Right F(f) \subseteq \ker \Right F(g)\).  It remains to prove \(\im \Right F(f)\supseteq \ker \Right F(g)\).

  Let \(s\in \Qeq(B,C)\) and \(x\in F(C)\) represent an element in \(\ker \Right F(g)\).  By the definition of the functoriality of \(\Right F\), this means that there is a commuting diagram
  \[
  \xymatrix{C\ar[r]^{\hat{g}}&C''\\
    B\ar[r]^g\ar[u]^s&B''\ar[u]_{s''}
  }
  \]
  with \(s''\in\Qeq\) and \(F(\hat{g})(x)=0\).  By the properties of triangulated categories, we may complete this to a morphism of exact triangles
  \[
  \xymatrix{C'\ar[r]^{\hat{f}}&C\ar[r]^{\hat{g}}&C''\ar[r]&C'[1]\\
    B'\ar[u]^{s'}\ar[r]^f&B\ar[r]^g\ar[u]^s&B''\ar[u]^{s''}\ar[r]&B'[1]\ar[u].}
  \]
  Since \(s,s''\in\Qeq\), we also get \(s'\in\Qeq\) from the Octahedral Axiom.  Since~\(F\) is homological, there is an element \(x'\in F(C')\) with \(F(\hat{f})(x')=x\).  Therefore, the class of the pair \((s',x')\) in \(\Right F(B')\) maps to the class of \((s,x)\).
\end{proof}

Recall that the category~\(\Tri^\op\) is again triangulated, with new suspension automorphism \(A\mapsto A[-1]\) and the same exact triangles: \(A[1]\leftarrow C \leftarrow B\leftarrow A\) is an exact triangle in~\(\Tri^\op\) if \(A\to B\to C\to A[1]\) is an exact triangle in~\(\Tri\).  Thus we may view a cohomological functor \(\Tri\to\Ab\) as a homological functor \(\Tri^\op\to\Ab\).  Definition~\ref{def:localisation_functor} applied to a homological functor \(\Tri^\op\to\Ab\) yields a right localisation for the original cohomological functor \(\Tri\to\Ab\).

Theorem~\ref{thm:localisation_homological} may be extended to the case where~\(F\) is a homological functor with values in an Abelian category~\(\Cat\) with \emph{exact filtered colimits}.  However, some assumption on~\(\Cat\) seems necessary.  In particular, it is unclear how to treat homological functors \(\Tri\to\Ab^\op\) because filtered colimits in~\(\Ab^\op\) are filtered limits in~\(\Ab\), and these need not be exact.

By definition, the functor \(B\mapsto \Loc(A,B)\) is the right localisation of \(B\mapsto \Tri(A,B)\).  Therefore, this functor is homological by Theorem~\ref{thm:localisation_homological}.  Since filtered colimits are exact, the functor \(A\mapsto \Loc(A,B)\) is cohomological by~\eqref{eq:localisation}.  Of course, this also follows from the stronger statement that~\(\Loc\) is a triangulated category.

We may also apply the construction above to get the localisation of~\(\Tri^\op\) at~\(\Sub^\op\).  The opposite category of this localisation provides another model for~\(\Loc\) that is based on a calculus of \emph{right} fractions instead of left fractions.  Both constructions agree because both localisations share the same universal property.

\begin{prop}
  \label{pro:local_functors}
  Let \(F\colon \Tri\to\Ab\) be a homological functor.  The following assertions are equivalent:
  \begin{enumerate}[label=\textup{(\alph{*})}]
  \item the natural transformation \(F\Rightarrow \Right F\) is invertible;
  \item \(F(E)\cong0\) for all \(E\inOb\Sub\);
  \item \(F(s)\) is invertible for all \(s\in\Qeq\);
  \item \(F\) factors through a homological functor \(\Loc\to\Ab\).
  \end{enumerate}
  Furthermore, \(\Right F\) always satisfies these equivalent conditions.
\end{prop}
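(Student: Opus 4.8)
The plan is to run the cycle of implications (a)$\Rightarrow$(b)$\Rightarrow$(c)$\Rightarrow$(a) together with the separate equivalence (c)$\Leftrightarrow$(d), and to extract the ``furthermore'' clause from an explicit computation of $\Right F$ on objects of~$\Sub$; that same computation will supply the step (a)$\Rightarrow$(b).

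I would first dispatch (b)$\Leftrightarrow$(c), which is the purely triangulated part. Given (b) and $s\in\Qeq$ with cone $C\inOb\Sub$, apply~$F$ to the exact triangle $A\xrightarrow{s}B\to C\to A[1]$ and its rotations; since~$\Sub$ is closed under (de)suspension, $F(C[n])=0$ for all~$n$, so the resulting long exact sequence collapses to show that $F(s)$ is invertible. Conversely, for $E\inOb\Sub$ the cone of $0\to E$ is~$E$, so $0\to E$ lies in~$\Qeq$; under (c) the map $F(0)\to F(E)$ is then invertible, and $F(0)=0$ (apply~$F$ to the exact triangle $0\to0\to0\to0$), whence $F(E)=0$.

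For (c)$\Leftrightarrow$(d) I would use that, as recorded in the excerpt, the underlying category of~$\Loc$ is the Gabriel--Zisman localisation of~$\Tri$ at~$\Qeq$ and an arrow lies in~$\Qeq$ iff it becomes invertible in~$\Loc$. Hence if $F=\bar F\circ L$ then $F(s)=\bar F(L(s))$ is invertible for $s\in\Qeq$, giving (d)$\Rightarrow$(c). For (c)$\Rightarrow$(d) the universal property of the localisation at~$\Qeq$ produces a unique functor $\bar F\colon\Loc\to\Ab$ with $\bar F\circ L=F$, and the point is to check that~$\bar F$ is homological; here I would invoke that, by the construction of the triangulation on~$\Loc$, every exact triangle in~$\Loc$ is isomorphic to~$L$ applied to an exact triangle of~$\Tri$, so $\bar F$ carries it to a three-term sequence isomorphic to an exact one, hence exact. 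This transport of exactness along an isomorphism of triangles, together with pinning down the correct statement about distinguished triangles in~$\Loc$, is the only step that needs genuine care; the rest of (c)$\Leftrightarrow$(d) is formal.

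It remains to link (a). For (c)$\Rightarrow$(a): every morphism of~$\Wo{B}$ is, by the two-out-of-three property, given by an arrow in~$\Qeq$, so under (c) the diagram $(s\colon B\to C)\mapsto F(C)$ consists of isomorphisms. Computing its filtered colimit in~$\Ab$ explicitly, the canonical map $F(B)\to\Right F(B)$ attached to the object $(\id_B\colon B\to B)$ --- which is exactly the component of $F\Rightarrow\Right F$ at~$B$ --- is injective (an element dying in the colimit is killed by a transition map, hence by an isomorphism) and surjective (any $x\in F(C)$ over $(s\colon B\to C)$ is the image of $F(s)^{-1}(x)\in F(B)$, using the morphism $(\id_B)\to(s)$ given by~$s$), hence invertible, which is (a). Finally I would compute $\Right F$ on~$\Sub$: for $E\inOb\Sub$ the arrow $E\to0$ lies in~$\Qeq$ and is a \emph{terminal} object of~$\Wo{E}$, so the colimit defining $\Right F(E)$ is $F(0)=0$. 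Since $\Right F$ is homological by Theorem~\ref{thm:localisation_homological}, this shows that $\Right F$ always satisfies (b) --- hence all four conditions --- which is the last assertion; and if (a) holds then $F\cong\Right F$ also satisfies (b), closing the cycle.
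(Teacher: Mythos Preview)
Your proof is correct and follows essentially the same route as the paper: both run the cycle (a)$\Rightarrow$(b)$\Leftrightarrow$(c)$\Rightarrow$(a) together with (c)$\Leftrightarrow$(d), and both obtain (a)$\Rightarrow$(b) and the ``furthermore'' clause from the observation that $E\to 0$ is terminal in $\Wo{E}$ for $E\inOb\Sub$, forcing $\Right F(E)=F(0)=0$. Your treatment is a bit more explicit in places---spelling out injectivity and surjectivity in (c)$\Rightarrow$(a), and noting that exact triangles in $\Loc$ are images of exact triangles in~$\Tri$ for (c)$\Rightarrow$(d)---where the paper simply records the conclusion or cites Neeman, but the underlying argument is the same.
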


\begin{proof}
  If \(E\inOb\Sub\), then the zero map \(E\to 0\) is an \(\Sub\)\nb-weak equivalence.  It is a final object in \(\Wo{E}\), so that \(\Right F(E) = F(0)=0\).  Thus \(\Right F(E)=0\) always satisfies~(b), and hence~(a) implies~(b).  If \(s\in\Qeq\), then~\(s\) is part of an exact triangle \(A\xrightarrow{s} B\to E\to A[1]\) with \(E\inOb\Sub\).  The long exact sequence for~\(F\) applied to this exact triangle shows that \(F_*(s)\) is invertible if and only if \(F_*(E)\cong0\).  Since \(\Qeq\) and~\(\Sub\) are closed under suspensions, this yields (b)\(\iff\)(c).  Since~\(\Loc\) is the localisation of~\(\Tri\) at~\(\Qeq\) in the sense of category theory, a functor on~\(\Tri\) factors through~\(\Loc\) if and only if it maps all arrows in~\(\Qeq\) to invertible arrows.  Furthermore, the functor on~\(\Loc\) induced in this way is again homological if~\(F\) was (see~\cite{Neeman:Triangulated}).  Thus (c)\(\iff\)(d).  Finally, (c)~implies that the maps \(F(B)\to F(C)\) are invertible for all \(s\colon B\to C\) in~\(\Wo{B}\).  Thus~(c) implies~(a).
\end{proof}

There is an analogous result for cohomological functors.

A (co)homological functor with the equivalent properties in Proposition~\ref{pro:local_functors} is called \emph{local}.  Condition~(d) means that local (co)homological functors \(\Tri\to\Ab\) are equivalent to local (co)homological functors \(\Loc\to\Ab\).

\begin{prop}
  \label{pro:localisation_universal}
  The localisation \(\Right F\) is the universal local homological functor on~\(\Tri\) equipped with a natural transformation \(F\Rightarrow \Right F\): if~\(G\) is any local homological functor on~\(\Tri\), then there is a natural bijection between natural transformations \(F\Rightarrow G\) and natural transformations \(\Right F\Rightarrow G\).
\end{prop}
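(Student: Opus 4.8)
The plan is to show that post-composition with the canonical natural transformation \(\eta\colon F\Rightarrow\Right F\) of Definition~\ref{def:localisation_functor} is a bijection from natural transformations \(\Right F\Rightarrow G\) to natural transformations \(F\Rightarrow G\), for every local homological functor~\(G\), and that this bijection is natural in~\(G\).

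The one fact I would isolate first is this: since \(\Right F\) is local by the last sentence of Proposition~\ref{pro:local_functors}, the map \(\Right F(s)\) is invertible for every \(s\in\Qeq\); the same holds for \(G(s)\). Write \(\iota^B_s\colon F(C)\to\Right F(B)\) for the structure map of the colimit attached to an object \((s\colon B\to C)\inOb\Wo{B}\), so that the component of~\(\eta\) is \(\eta_B=\iota^B_{\id_B}\). I would prove the identity
\[
\iota^B_s = \Right F(s)^{-1}\circ\eta_C \qquad\text{for }(s\colon B\to C)\inOb\Wo{B}
\]
by unravelling the functoriality of \(\Right F\) from Lemma~\ref{lem:Wo_B_functor}: applying \(\Right F(s)\) to the object \((s)\inOb\Wo{B}\), one may take the auxiliary arrows in the proof of Lemma~\ref{lem:Wo_B_functor} to be identities, giving \(\Right F(s)\circ\iota^B_s=\eta_C\).

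For injectivity, suppose \(\psi\colon\Right F\Rightarrow G\) satisfies \(\psi\circ\eta=\phi\). For \((s\colon B\to C)\inOb\Wo{B}\), naturality of~\(\psi\) together with the identity above gives \(\psi_B\circ\iota^B_s=\psi_B\circ\Right F(s)^{-1}\circ\eta_C=G(s)^{-1}\circ\psi_C\circ\eta_C=G(s)^{-1}\circ\phi_C\). Since every element of \(\Right F(B)\) lies in the image of some \(\iota^B_s\), this pins down \(\psi\) from~\(\phi\). For surjectivity, given \(\phi\colon F\Rightarrow G\), I would define \(\psi_B\colon\Right F(B)\to G(B)\) on the colimit component \(F(C)\) by \(G(s)^{-1}\circ\phi_C\), which is the formula just forced. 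Three checks remain: (i) compatibility of the \(G(s)^{-1}\circ\phi_C\) with a morphism \(r\colon C\to C'\) of \(\Wo{B}\), which uses naturality of~\(\phi\) and \(G(rs)=G(r)G(s)\); (ii) naturality of~\(\psi\) in~\(B\), where for \(f\colon B\to B'\) one compares the two composites using the description of \(\Right F(f)\) via an auxiliary square \(gs=tf\), the commutativity reducing to \(G(gs)=G(tf)\); (iii) \(\psi\circ\eta=\phi\), immediate from \(\psi_B\circ\iota^B_{\id_B}=G(\id_B)^{-1}\circ\phi_B=\phi_B\). Naturality of the bijection in~\(G\) is then routine: a natural transformation \(G\Rightarrow G'\) of local homological functors induces post-composition maps on the two \(\mathrm{Nat}\)-sets that commute with \(\eta^*\).

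I expect step~(ii) to be the only real obstacle: one must trace carefully through the choice-independent functoriality of \(\Right F\) supplied by Lemma~\ref{lem:Wo_B_functor} and verify that the relevant square commutes after applying~\(G\), once more invoking that local functors invert \(\Sub\)\nb-weak equivalences. Everything else is a short diagram chase built on the identity \(\iota^B_s=\Right F(s)^{-1}\circ\eta_C\).
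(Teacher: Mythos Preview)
Your proof is correct and complete; it is a more explicit, colimit-level version of the paper's argument. The paper proceeds more conceptually: given \(\Phi\colon F\Rightarrow G\), it observes that the assignment \(F\mapsto\Right F\) is functorial in natural transformations, obtains \(\Right\Phi\colon\Right F\Rightarrow\Right G\), and then composes with the isomorphism \(\Right G\cong G\) coming from Proposition~\ref{pro:local_functors}(a) to get the desired \(\Phi'\colon\Right F\Rightarrow G\). Uniqueness is asserted but not unpacked. Your approach instead builds \(\psi\) directly from the universal property of the defining colimit via the formula \(G(s)^{-1}\circ\phi_C\), and the key identity \(\iota^B_s=\Right F(s)^{-1}\circ\eta_C\) makes both the injectivity and the compatibility checks transparent. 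The two routes compute the same map---your \(G(s)^{-1}\circ\phi_C\) is exactly what the composite \(\Right G\cong G\) does to \(\Right\Phi\)---but yours has the advantage of making the uniqueness argument explicit and of not relying on the unstated functoriality of \(\Right\) in~\(F\), at the cost of the routine verifications (i)--(iii).
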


This universal property characterises~\(\Right F\) uniquely up to natural isomorphism.

\begin{proof}
  If~\(G\) is local, then a natural transformation \(\Phi\colon F\Rightarrow G\) induces a natural transformation \(\Phi'\colon \Right F\Rightarrow \Right G\cong G\).  The product of~\(\Phi'\) with the natural transformation \(\Psi\colon F\Rightarrow \Right F\) is again~\(\Phi\), and~\(\Phi'\) is the only natural transformation \(\Right F\Rightarrow G\) with \(\Phi'\circ\Psi=\Phi\).
\end{proof}

\section{Colocalisation}
\label{sec:colocalisation}

Let \(\Subo{B}\) be the category, whose objects are arrows \(f\colon E\to B\) with \(E\inOb\Sub\) and whose arrows are commuting triangles
\[
\xymatrix@R-2em{E\ar[dd]_r\ar[dr]^f\\&B.\\E'\ar[ru]_{f'}}
\]

\begin{lem}
  \label{lem:Subo_filtered}
  The category \(\Subo{B}\) is filtered for all \(B\inOb\Tri\).
\end{lem}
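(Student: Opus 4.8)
The plan is to verify directly the two conditions (F1) and~(F2) defining a filtered category. The only inputs needed are that~\(\Sub\) is thick and that \(\Tri(\blank,B)\) is cohomological; the argument is dual in spirit to the verification of~(LF3) in Lemma~\ref{lem:trianleft}, with the cone of the difference of two parallel maps doing the work.

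For~(F1), given objects \(f_1\colon E_1\to B\) and \(f_2\colon E_2\to B\) of \(\Subo{B}\), I would set \(E\defeq E_1\oplus E_2\), which again lies in~\(\Sub\): a thick subcategory is closed under finite direct sums, as one sees by applying the defining property of a triangulated subcategory to the split exact triangle \(E_1\to E_1\oplus E_2\to E_2\xrightarrow{0} E_1[1]\). The morphism \(g\colon E_1\oplus E_2\to B\) induced by \(f_1\) and~\(f_2\), together with the two coordinate embeddings \(E_k\to E_1\oplus E_2\), then provides morphisms \(f_1\to g\leftarrow f_2\) in \(\Subo{B}\), which is~(F1).

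For~(F2), let \(r_1,r_2\colon (E\xrightarrow{f}B)\rightrightarrows(E'\xrightarrow{f'}B)\) be parallel morphisms of \(\Subo{B}\); by definition \(f'r_1=f=f'r_2\), so that \(f'\circ(r_1-r_2)=0\). I would embed \(r_1-r_2\) in an exact triangle \(E\xrightarrow{r_1-r_2} E'\xrightarrow{t} E''\to E[1]\). Since \(E,E'\inOb\Sub\) and~\(\Sub\) is thick, also \(E''\inOb\Sub\). Consecutive morphisms in an exact triangle compose to zero, so \(t\circ(r_1-r_2)=0\), that is, \(tr_1=tr_2\). Applying the cohomological functor \(\Tri(\blank,B)\) to this triangle produces an exact sequence \(\Tri(E'',B)\xrightarrow{t^*}\Tri(E',B)\xrightarrow{(r_1-r_2)^*}\Tri(E,B)\); as \(f'\) lies in the kernel of \((r_1-r_2)^*\), there is \(g\colon E''\to B\) with \(gt=f'\). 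Hence \(t\) is a morphism \((E'\xrightarrow{f'}B)\to(E''\xrightarrow{g}B)\) in \(\Subo{B}\) with \(tr_1=tr_2\), which is exactly~(F2).

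I do not expect a serious obstacle here: once the cone of \(r_1-r_2\) is formed, both the equalising property of~\(t\) and the existence of~\(g\) are forced. The only points that deserve to be stated explicitly are the closure of a thick subcategory under finite direct sums and under cones of its own morphisms, both immediate from the axioms of a triangulated subcategory.
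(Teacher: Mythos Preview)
Your proof is correct and is essentially identical to the paper's: both arguments handle~(F1) via the direct sum \(E_1\oplus E_2\to B\), and both handle~(F2) by completing \(r_1-r_2\) to an exact triangle and using that \(\Tri(\blank,B)\) is cohomological to factor~\(f'\) through the cone. Your version is slightly more explicit about why the cone lies in~\(\Sub\) and why \(tr_1=tr_2\), but there is no substantive difference.
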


\begin{proof}
  The first axiom~(F1) of a filtered category follows easily because~\(\Sub\) is additive: any pair of maps \(E_1\to B\), \(E_2\to B\) is dominated by \(E_1\oplus E_2\to B\).  To verify~(F2), we must equalise a diagram of the form
  \begin{equation}
    \label{parallel1}
    \begin{gathered}
      \xymatrix@R-2em{E_1\ar@<-0.5ex>[dd]_r\ar@<+0.5ex>[dd]^{r'}\ar[dr]^{f_1}\\&B.\\E_2\ar[ru]_{f_2}}
    \end{gathered}
  \end{equation}
  Embed \(r-r'\) in an exact triangle \(E_1\xrightarrow{r-r'} E_2\to E_3\to E_1[1]\).  Since \(f_2\circ (r-r')=0\) and \(\Tri(\blank,B)\) is cohomological, we may factor~\(f_2\) through the map \(E_2\to E_3\).  This yields a commuting diagram
  \[
  \xymatrix@R-.5em{
    E_1\ar[dr]^{f_1} \ar@<-0.5ex>[d]_-{r} \ar@<0.5ex>[d]^-{r'}\\
    E_2\ar[r]^{f_2}\ar[d]&B,\\
    E_3\ar[ru]_{f_3}
  }
  \]
  which equalises the diagram~\eqref{parallel1} in \(\Subo{B}\).
\end{proof}

\begin{defn}
  \label{def:colocalisation}
  The \emph{colocalisation} of~\(\Tri\) at~\(\Sub\) is defined by
  \[
  \Colo(A,B) \defeq \varinjlim_{(s\colon C\to B) \inOb\Subo{B}} \Tri(A,C).
  \]
\end{defn}

As in the discussion after~\eqref{eq:localisation}, this colimit exists if~\(\Tri\) is small or if~\(\Subo{B}\) has a final element.  The latter happens for all~\(B\) if and only if~\(\Sub\) is part of a complementary pair of thick subcategories \((\Sub^\bot,\Sub)\).  In this case, the map \(L^\bot B\to B\) in~\eqref{eq:triangle_complementary} is a final object in~\(\Subo{B}\), so that \(\Colo(A,B) \cong \Tri(A,L^\bot B)\).

We will see below that Definition~\ref{def:colocalisation} leads to an exact sequence as in~\eqref{eq:localisation_exact_sequence}.

The naturality of the inductive system \((C)_{s\inOb\Subo{B}}\) is trivial: an arrow \(f\colon B\to B'\) in~\(\Tri\) induces a morphism of inductive systems
\[
(C)_{(s\colon C\to B)\inOb\Subo{B}}\to (C')_{(s'\colon C'\to B')\inOb\Subo{B'}},
\]
which maps \(s\colon C\to B\) to \(f\circ s\colon C\to B'\) and acts identically on~\(C\).  As a result, \(B\mapsto \Colo(A,B)\) is a bifunctor that is contravariant in~\(A\) and covariant in~\(B\).

The maps \(s\colon C\to B\) provide a natural natural transformation
\[
\Colo(A,B)\to \Tri(A,B).
\]
We may describe elements of~\(\Colo(A,B)\) as diagrams \(A \xrightarrow{f} \tilde{B} \xrightarrow{s} B\) with \(\tilde{B}\inOb\Sub\).  The natural map to \(\Tri(A,B)\) maps this diagram to \(sf\colon A\to B\).

The naturality \(\Tri(B,C)\times \Colo(A,B)\to\Colo(A,C)\) and the natural map \(\Colo(B,C)\to\Tri(B,C)\) provide a multiplication
\[
\Colo(B,C)\times\Colo(A,B) \to \Colo(A,C).
\]
The product of \(B \xrightarrow{f_1}\tilde{C} \xrightarrow{s_1} C\) and
\(A \xrightarrow{f_2}\tilde{B} \xrightarrow{s_2} B\) is
\[
A \xrightarrow{f_1s_2f_2}\tilde{C} \xrightarrow{s_1} C
\sim A \xrightarrow{f_2}\tilde{B} \xrightarrow{s_1f_1s_2} C,
\]
where~\(\sim\) denotes equivalence in the inductive limit \(\Colo(A,C)\).  That is, we get the same multiplication on \(\Colo\) if we use the right multiplication map
\[
\Colo(B,C)\times \Tri(A,B)\to\Colo(A,C)
\]
and the natural map \(\Colo(A,B)\to\Tri(A,B)\).  It is also straightforward to see that the multiplication on \(\Colo\) is associative.  However, \(\Colo\) has no identity maps, so that it is not a category.

\begin{defn}
  \label{def:colocalisation_functor}
  Given a homological functor \(F\colon \Tri\to\Ab\) to the category of Abelian groups, we define its \emph{right colocalisation} at~\(\Sub\) by
  \[
  \Rightco F(B) \defeq  \varinjlim_{(s\colon C\to B) \inOb\Subo{B}} F(C).
  \]
\end{defn}

The naturality of the inductive system \((C)_{s\inOb\Subo{B}}\) implies
that~\(\Rightco F\) is a functor on~\(\Tri\).  The maps \(s\colon C\to
B\) induce a natural map \(\Rightco F(B) \to F(B)\).  A natural
transformation \(F\Rightarrow F'\) clearly induces a natural
transformation \(\Rightco F\Rightarrow \Rightco F'\).

\begin{thm}
  \label{the:colocalisation_homological}
  If~\(F\) is a homological functor on~\(\Tri\), then so is~\(\Rightco F\).
\end{thm}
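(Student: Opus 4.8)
The strategy is to mirror the proof of Theorem~\ref{thm:localisation_homological}, replacing the filtered category \(\Wo{B}\) by \(\Subo{B}\), which is filtered by Lemma~\ref{lem:Subo_filtered}. Since filtered colimits of Abelian groups are exact, it suffices to produce, for an exact triangle \(B'\xrightarrow{f} B\xrightarrow{g} B''\to B'[1]\), a filtered system of exact sequences \(F(C')\to F(C)\to F(C'')\) whose colimit is \(\Rightco F(B')\to\Rightco F(B)\to\Rightco F(B'')\). The inclusion \(\im\Rightco F(f)\subseteq\ker\Rightco F(g)\) is automatic from functoriality, so the real content is the reverse inclusion.

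First I would take \(s\in\Sub\text{-objects}\) of \(\Subo{B}\), that is an arrow \(s\colon C\to B\) with \(C\inOb\Sub\), together with \(x\in F(C)\) representing a class in \(\ker\Rightco F(g)\). Unwinding the definition of the functoriality of \(\Rightco F\) on the map \(g\), this means there is an object \((s''\colon C''\to B'')\inOb\Subo{B''}\) and a morphism \(\hat g\colon C\to C''\) in~\(\Tri\) with \(s''\hat g = g s\), such that \(F(\hat g)(x)=0\) in the colimit \(\Rightco F(B'')\) — and by passing further out in the filtered system \(\Subo{B''}\) (again using Lemma~\ref{lem:Subo_filtered} and exactness of the colimit, or just that a colimit element is zero iff it dies at a finite stage) we may arrange \(F(\hat g)(x)=0\) already in \(F(C'')\). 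Now I would complete the square
\[
\xymatrix{
  C\ar[r]^{\hat g}\ar[d]_s & C''\ar[d]^{s''}\\
  B\ar[r]^g & B''
}
\]
to a morphism of exact triangles
\[
\xymatrix{
  C'\ar[r]^{\hat f}\ar[d]_{s'} & C\ar[r]^{\hat g}\ar[d]^s & C''\ar[r]\ar[d]^{s''} & C'[1]\ar[d]\\
  B'\ar[r]^f & B\ar[r]^g & B''\ar[r] & B'[1].
}
\]
Here \(C'\) is the cone of \(\hat g\) (or obtained from the octahedral axiom applied to the composable pair whose composite expresses the square); since \(C\) and \(C''\) lie in~\(\Sub\) and \(\Sub\) is thick, \(C'\inOb\Sub\) as well — it is an extension of \(C''[-1]\) and \(C\). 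Thus \((s'\colon C'\to B')\inOb\Subo{B'}\). Since \(F\) is homological and \(F(\hat g)(x)=0\), exactness of \(F(C')\xrightarrow{F(\hat f)} F(C)\xrightarrow{F(\hat g)} F(C'')\) yields \(x'\in F(C')\) with \(F(\hat f)(x')=x\). Then the class of \((s',x')\) in \(\Rightco F(B')\) maps to the class of \((s,x)\) in \(\Rightco F(B)\), proving \(\ker\Rightco F(g)\subseteq\im\Rightco F(f)\).

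The main obstacle — and the point that needs care rather than the diagram chase — is checking that the functoriality of \(\Rightco F\) really produces the data \((s'',\hat g)\) as described, and that \(C'\) can be chosen in \(\Sub\) compatibly with the map \(s'\colon C'\to B'\). For the first, one must note that \(\Subo{B}\) is covariant along maps \(B\to B'\) in the trivial way recorded after Definition~\ref{def:colocalisation} (post-compose the structure map), so that the induced map on colimits \(\Rightco F(B)\to\Rightco F(B'')\) sends the class of \((s,x)\) to the class of \((g s, x)\) — and the statement that this vanishes in the colimit is, by filteredness of \(\Subo{B''}\), exactly the existence of some \((s''\colon C''\to B'')\) and a morphism in \(\Subo{B''}\) from \((g s, \hat g \text{ of } x)\)… i.e.\ a refinement \(\hat g\colon C\to C''\) over which \(F\) kills \(x\). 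For the second point, thickness of \(\Sub\) is precisely what guarantees the cone of a morphism between objects of \(\Sub\) stays in \(\Sub\), and the morphism-of-triangles completion simultaneously supplies \(s'\colon C'\to B'\). Once both are in place, the argument is identical in structure to Theorem~\ref{thm:localisation_homological}, with the octahedral axiom replaced by the thickness of~\(\Sub\).
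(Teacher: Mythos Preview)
Your proof is correct and follows essentially the same route as the paper's own argument: represent a class in \(\ker\Rightco F(g)\) by a pair \((s,x)\), use filteredness of \(\Subo{B''}\) to arrange \(F(\hat g)(x)=0\) at a finite stage, complete the commuting square to a morphism of exact triangles, and read off a preimage via the homological property of~\(F\). You are a bit more explicit than the paper in two places---you spell out why the vanishing in the colimit yields the square \((s'',\hat g)\), and you note that \(C'\inOb\Sub\) follows from thickness of~\(\Sub\)---but these are elaborations of the same proof, not a different approach.
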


\begin{proof}
  Let \(B'\xrightarrow{f} B\xrightarrow{g} B''\to B'[1]\) be an exact triangle.  We must show that
  \[
  \Rightco F(B')
  \xrightarrow{\Rightco F(f)} \Rightco F(B)
  \xrightarrow{\Rightco F(g)} \Rightco F(B'')
  \]
  is an exact sequence.  Functoriality already implies \(\ker \Rightco F(g) \supseteq \im \Rightco F(f)\).  It remains to show \(\ker \Rightco F(g)\subseteq \im \Rightco F(f)\).

  Represent an element of \(\Rightco F(B)\) by a pair \((s,x)\) with \(E\inOb\Sub\), \(s\colon E\to B\) and \(x\in F(E)\).  If \(\Rightco F(g)(s,x)=0\), then there is a commuting diagram
  \[
  \xymatrix{
    E \ar[r]^{\hat{g}}\ar[d]^{s}&
    E''\ar[d]^{s''}\\
    B\ar[r]^{g}&B''
  }
  \]
  with \(E''\inOb\Sub\) such that \(F(\hat{g})(x)=0\).  We may embed the above commuting square into a morphism of exact triangles
  \[
  \xymatrix{
    E' \ar[r]^{\hat{f}}\ar[d]^{s'}&
    E \ar[r]^{\hat{g}}\ar[d]^{s}&
    E''\ar[d]^{s''} \ar[r]&
    E'[1] \ar[d]^{s'[1]}\\
    B' \ar[r]^{f}&B\ar[r]^{g}&B''\ar[r]&B'[1]
  }
  \]
  Since~\(F\) is homological and \(F(\hat{g})(x)=0\), there is \(x'\in F(E')\) with \(x=F(\hat{f})(x')\).  Thus the class of \((s',x')\) in \(\Rightco F(B')\) is a pre-image for the class of \((s,x)\).
\end{proof}

\begin{thm}
  \label{thm:colocalisation_bifunctor}
  The map \((A,B)\mapsto \Colo(A,B)\) defines a bifunctor that is cohomological in the first and homological in the second variable.
\end{thm}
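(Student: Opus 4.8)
The plan is to reduce both halves of the statement to results already in hand. For the second variable, observe that for each fixed object~\(A\) the assignment \(B\mapsto \Colo(A,B)\) is, by comparing Definitions~\ref{def:colocalisation} and~\ref{def:colocalisation_functor}, precisely the right colocalisation \(\Rightco F\) of the homological functor \(F\defeq \Tri(A,\blank)\colon \Tri\to\Ab\). Since \(\Tri(A,\blank)\) is homological (the defining long exact sequences of a triangulated category), Theorem~\ref{the:colocalisation_homological} applies directly and shows that \(B\mapsto \Colo(A,B)\) is homological. Functoriality in~\(A\) comes for free: \(A\mapsto \Tri(A,\blank)\) is a functor from~\(\Tri^\op\) to homological functors on~\(\Tri\), and \(F\mapsto \Rightco F\) is functorial in~\(F\); this is exactly the bifunctoriality (contravariant in~\(A\), covariant in~\(B\)) already recorded before the statement.

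For the first variable I would instead fix~\(B\). The index category \(\Subo{B}\) does not involve~\(A\), so
\[
\Colo(\blank,B) = \varinjlim_{(s\colon C\to B)\inOb\Subo{B}} \Tri(\blank,C)
\]
is a colimit of the cohomological functors \(\Tri(\blank,C)\) over one fixed diagram. Given an exact triangle \(A'\to A\to A''\to A'[1]\), each sequence \(\Tri(A'',C)\to \Tri(A,C)\to \Tri(A',C)\) is exact. The category \(\Subo{B}\) is filtered by Lemma~\ref{lem:Subo_filtered} and filtered colimits of Abelian groups are exact, so passing to the colimit preserves exactness; hence \(\Colo(A'',B)\to \Colo(A,B)\to \Colo(A',B)\) is exact, i.e.\ \(\Colo(\blank,B)\) is cohomological.

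It remains only to check that these two functorialities are compatible, so that one really has a bifunctor: a commuting square built from \(A\to A'\) and \(B\to B'\) should induce a commuting square of maps on \(\Colo\). This holds because the morphism of inductive systems attached to \(B\to B'\) acts on the index objects and on each group \(\Tri(A,C)\) in a way that does not interact with~\(A\), while a morphism \(A\to A'\) induces the maps \(\Tri(A',C)\to\Tri(A,C)\) applied term by term; the two therefore commute. Combined with the preceding two paragraphs this gives the theorem.

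I expect no genuine obstacle: all the substantive work is already done in Lemma~\ref{lem:Subo_filtered} and Theorem~\ref{the:colocalisation_homological}. The one point deserving a sentence of care is the observation that, although the indexing category \(\Subo{B}\) varies with~\(B\), for each fixed~\(B\) the colimit defining \(\Colo(A,B)\) is an honest filtered colimit of functors of~\(A\), which is what legitimises invoking exactness of filtered colimits in the first variable; dually, \(B\mapsto \Colo(A,B)\) is literally an instance of the construction \(\Rightco\).
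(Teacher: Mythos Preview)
Your proof is correct and follows exactly the paper's approach: \(\Colo(A,\blank)=\Rightco\bigl(\Tri(A,\blank)\bigr)\) is homological by Theorem~\ref{the:colocalisation_homological}, and \(\Colo(\blank,B)\) is cohomological because it is a filtered colimit (Lemma~\ref{lem:Subo_filtered}) of the cohomological functors \(\Tri(\blank,C)\). The paper's proof is just these two sentences; your extra remarks on bifunctor compatibility are fine but already covered in the discussion preceding the theorem.
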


\begin{proof}
  Since \(\Colo(A,\blank)\) is the right colocalisation of
  \(\Tri(A,\blank)\), Theorem~\ref{the:colocalisation_homological} shows
  that \(\Colo(A,B)\) is homological in the second variable.  It is
  cohomological in the first variable because filtered colimits are
  exact.
\end{proof}

\begin{defn}
  \label{def:colocal}
  We call a (co)homological functor \(F\colon \Tri\to\Ab\) \emph{colocal} if the natural transformation \(\Rightco F\to F\) is invertible.
\end{defn}

\begin{prop}
  \label{pro:colocal_functors}
  Let \(F\colon \Sub\to\Ab\) be a homological functor.  Then there is a unique colocal homological functor \(\bar{F}\colon \Tri\to\Ab\) that extends~\(F\).  Thus colocal homological functors \(\Tri\to\Ab\) are essentially equivalent to homological functors \(\Sub\to\Ab\).

  Furthermore, \(\Rightco G\) is colocal for any homological functor \(G\colon \Tri\to\Ab\).  The natural transformation \(\Rightco G\Rightarrow G\) is universal among natural transformations from colocal functors to~\(G\).
\end{prop}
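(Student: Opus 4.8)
The plan is to deduce everything from two elementary facts about the categories $\Subo{B}$. First, for $E\inOb\Sub$ the identity $\id_E$ is a \emph{final} object of $\Subo{E}$: any object $(f\colon C\to E)$ of $\Subo{E}$ admits the morphism~$f$ to the object~$\id_E$. Second, the diagram defining $\Rightco G(B)=\varinjlim_{(s\colon C\to B)\inOb\Subo{B}}G(C)$ only involves~$G$ on objects of~$\Sub$; hence for any homological $G\colon\Tri\to\Ab$ the formula $\overline{G|_\Sub}(B)\defeq\varinjlim_{(s\colon C\to B)\inOb\Subo{B}}G(C)$ makes sense already for a homological functor on~$\Sub$, and $\Rightco G=\overline{G|_\Sub}$. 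I write $\epsilon_G\colon\Rightco G\Rightarrow G$ for the canonical natural transformation built from the maps $s\colon C\to B$.

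Given a homological $F\colon\Sub\to\Ab$, set $\bar F\defeq\overline{F}$, i.e.\ $\bar F(B)\defeq\varinjlim_{(s\colon C\to B)\inOb\Subo{B}}F(C)$; the indexing category is filtered by Lemma~\ref{lem:Subo_filtered}, the colimit exists under our standing hypotheses, and functoriality in~$B$ is the same triviality as for~$\Rightco$. The proof of Theorem~\ref{the:colocalisation_homological} goes through verbatim to show $\bar F$ is homological: the only values of~$F$ that appear are on objects of~$\Sub$ and on the exact triangle $E'\to E\to E''\to E'[1]$ produced there, which lies in~$\Sub$ because $\Sub$ is a triangulated subcategory closed under direct summands, so~$E'$ (a shift of the cone of $E\to E''$) belongs to~$\Sub$. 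By the first fact above, $\bar F(E)=F(E)$ naturally for $E\inOb\Sub$, so $\bar F$ extends~$F$; a short computation then identifies $\Rightco\bar F=\overline{\bar F|_\Sub}=\overline{F}=\bar F$ with $\epsilon_{\bar F}$ corresponding to the identity, so $\bar F$ is colocal. The same identity $\Rightco G=\overline{G|_\Sub}$ shows that $\Rightco G$ has the form~$\bar F$, hence is colocal, for every homological $G\colon\Tri\to\Ab$. Finally, if $G\colon\Tri\to\Ab$ is colocal and extends~$F$, then $G\cong\Rightco G=\overline{G|_\Sub}\cong\overline{F}=\bar F$ naturally; this yields uniqueness of the colocal extension and the asserted equivalence of categories.

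For the universal property I would isolate the following auxiliary statement: restriction along $\Sub\hookrightarrow\Tri$ is a bijection from natural transformations $H\Rightarrow K$ to natural transformations $H|_\Sub\Rightarrow K|_\Sub$ whenever $H$ is colocal ($K$ any homological functor). Injectivity holds because colocality makes $H(B)=\varinjlim_{\Subo{B}}H(C)$, so every element of $H(B)$ is $H(s)(v)$ for some $s\colon C\to B$ with $C\inOb\Sub$ and $v\in H(C)$, and $\Phi_B(H(s)v)=K(s)(\Phi_C v)$ is then determined by $\Phi|_\Sub$. Surjectivity: given $\psi\colon H|_\Sub\Rightarrow K|_\Sub$, the assignment $[(s\colon C\to B,v)]\mapsto K(s)(\psi_C v)$ is well defined on the colimit $H(B)=\varinjlim_{\Subo{B}}H(C)$ and natural in~$B$ by naturality of~$\psi$, and it restricts to~$\psi$ on~$\Sub$ by the finality fact. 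Now for colocal~$H$ and $\Phi\colon H\Rightarrow G$: since $(\Rightco G)|_\Sub=G|_\Sub$ canonically with $\epsilon_G$ restricting to the identity there, surjectivity produces $\Phi'\colon H\Rightarrow\Rightco G$ with $\Phi'|_\Sub=\Phi|_\Sub$; then $\epsilon_G\circ\Phi'$ and~$\Phi$ agree on~$\Sub$, hence $\epsilon_G\circ\Phi'=\Phi$ by injectivity, and any~$\Psi$ with $\epsilon_G\circ\Psi=\Phi$ has $\Psi|_\Sub=\Phi|_\Sub=\Phi'|_\Sub$, hence $\Psi=\Phi'$ by injectivity again.

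The main obstacle is organisational rather than conceptual: one must verify that all the identifications used ($\bar F(E)=F(E)$, $\Rightco\bar F=\bar F$, $(\Rightco G)|_\Sub=G|_\Sub$, $\epsilon_G|_\Sub=\id$) are natural in the relevant variables and compatible both with the colimit coprojections and with the functoriality maps of $\bar F$ and $\Rightco$, and that the homological property of~$\bar F$ genuinely uses~$F$ only on the triangulated subcategory~$\Sub$. Each check is routine, but there are several, and keeping track of which coprojection represents a given colimit class is where care is required.
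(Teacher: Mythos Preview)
Your proposal is correct and follows essentially the same route as the paper: define \(\bar F(B)=\varinjlim_{\Subo{B}}F(C)\), observe that the proof of Theorem~\ref{the:colocalisation_homological} only uses \(F\) on~\(\Sub\), use finality of \(\id_E\) in \(\Subo{E}\) to get \(\bar F|_\Sub=F\) and \(\Rightco G|_\Sub=G|_\Sub\), and deduce colocality of \(\Rightco G\) and uniqueness from the tautology \(\Rightco G=\overline{G|_\Sub}\). The only cosmetic difference is in the universal property: the paper simply applies the functor \(\Rightco\) to a transformation \(H\Rightarrow G\) and invokes \(H\cong\Rightco H\), declaring the resulting bijection ``straightforward'', whereas you unpack this by proving that restriction to~\(\Sub\) is a bijection on natural transformations out of a colocal functor. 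Your formulation is more explicit but amounts to the same mechanism, since both arguments ultimately rest on \(\Rightco G=\overline{G|_\Sub}\) and \(\epsilon_G|_\Sub=\id\).
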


\begin{proof}
  We extend \(F\colon \Sub\to\Ab\) by
  \[
  \bar{F}(B) \defeq \varinjlim_{(s\colon C\to B) \inOb\Subo{B}} F(C).
  \]
  The proof of Theorem~\ref{the:colocalisation_homological} only needs that~\(F|_\Sub\) is a homological functor.  Hence it shows that~\(\bar{F}\) is a homological functor.

  Let \(G\colon\Tri\to\Ab\) be any homological functor.  If \(B\inOb\Sub\), then \(\id_B\colon B\to B\) is a final object in \(\Subo{B}\), so that we get \(\Rightco G(B) \cong G(B)\).  In particular, this shows that \(\bar{F}|_\Sub=F\).  Hence \(\Rightco(\Rightco G) \cong \Rightco G\) and~\(\Rightco G\) is colocal.  The definition of the colocalisation~\(\Rightco G\) shows that two colocal functors that agree on~\(\Sub\) already agree on all of~\(\Tri\).

  If~\(H\) is any colocal homological functor with a natural transformation \(H\Rightarrow G\), then we get an induced natural transformation \(H\cong \Rightco H\Rightarrow \Rightco G\).  It is straightforward to see that this provides a bijection between natural transformations \(H\Rightarrow G\) and \(H\Rightarrow \Rightco G\).
\end{proof}

\section{The localisation--colocalisation exact sequence}
\label{sec:exact_sequence}

To relate the localisation and colocalisation of~\(\Tri\) at~\(\Sub\), we introduce a third filtered category~\(\Tria{B}\) that combines \(\Wo{B}\) and \(\Subo{B}\) for an object~\(B\) of~\(\Tri\).  Objects of \(\Tria{B}\) are exact triangles of the form
\[
E \to B\xrightarrow{s} C\to E[1]
\]
with \(E\inOb\Sub\) or, equivalently, \(s\in\Qeq\); arrows in~\(\Tria{B}\)
are morphisms of triangles of the form
\[
\xymatrix{
  E\ar[d]\ar[r]&B\ar[r]^s\ar@{=}[d]&C\ar[r]\ar[d]&E[1]\ar[d]\\
  E'\ar[r]& B\ar[r]^{s'}&C'\ar[r]&E'[1].}
\]

There are obvious forgetful functors from \(\Tria{B}\) to \(\Wo{B}\) and~\(\Subo{B}\) that extract the map \(B\to C\) or the map \(E\to B\), respectively.  Since \(s\in\Qeq\) if and only if \(E\inOb\Sub\), any object of \(\Wo{B}\) or \(\Subo{B}\) is in the range of this forgetful functor.  The axiom~(TR3) for triangulated categories implies that these two forgetful functors are surjective on arrows as well.

\begin{prop}
  \label{prop:category_triangles_filtered}
  The category~\(\Tria{B}\) is filtered.
\end{prop}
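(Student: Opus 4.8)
The plan is to verify the two filteredness axioms (F1) and (F2) for $\Tria{B}$ directly, using the corresponding properties of $\Wo{B}$ and $\Subo{B}$ together with the octahedral axiom to upgrade data about the maps $B\to C$ or $E\to B$ into full morphisms of triangles. The key structural fact I would exploit is that an object of $\Tria{B}$ is determined, up to the available choices, by either of its two ``legs'': the $\Sub$-weak equivalence $s\colon B\to C$ (equivalently the object of $\Wo{B}$) or the map $E\to B$ with $E\inOb\Sub$ (the object of $\Subo{B}$), and that the forgetful functors to $\Wo{B}$ and $\Subo{B}$ are surjective on objects and on arrows. So the rough idea is: to dominate two objects of $\Tria{B}$, first dominate their images in $\Wo{B}$ (which is filtered by Lemma~\ref{lem:trianleft} via the calculus of left fractions), then complete the resulting diagram of $\Sub$-weak equivalences out of $B$ to a morphism of exact triangles using (TR3), checking with Lemma~\ref{lem:two_six} and the octahedral axiom that the new cone again lies in $\Sub$.

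For (F1): given two objects $(E_i\to B\xrightarrow{s_i}C_i\to E_i[1])$, $i=1,2$, of $\Tria{B}$, use filteredness of $\Wo{B}$ to find $t\colon B\to C$ in $\Qeq$ with maps $r_i\colon C_i\to C$ and $r_i s_i = t$. Embed $t$ in an exact triangle $E\to B\xrightarrow{t}C\to E[1]$ with $E\inOb\Sub$; this is the candidate dominating object. Then for each $i$, the commuting square with vertical maps $\id_B$ and $r_i$ between the triangles of $s_i$ and $t$ can be completed by (TR3) to a morphism of exact triangles, giving the required arrow $(E_i\to B\to C_i)\to(E\to B\to C)$ in $\Tria{B}$. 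The only point needing care is that the completion is a genuine morphism in $\Tria{B}$, i.e.\ that it is the identity on $B$ — which it is by construction.

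For (F2): given two parallel arrows in $\Tria{B}$ from $(E\to B\xrightarrow{s}C\to E[1])$ to $(E'\to B\xrightarrow{s'}C'\to E'[1])$, both restricting to $\id_B$ on the middle term, push them forward to $\Wo{B}$ to get two parallel arrows $C\rightrightarrows C'$ under $B$; by filteredness of $\Wo{B}$ there is $u\colon C'\to C''$ in $\Qeq$ coequalising them, with $s''=us'\in\Qeq$. Embed $s''$ in a triangle $E''\to B\xrightarrow{s''}C''\to E''[1]$, $E''\inOb\Sub$, and complete the square (identity on $B$, $u$ on the cones' middle terms) to a morphism of triangles; this arrow $(E'\to\cdots)\to(E''\to\cdots)$ in $\Tria{B}$ then coequalises the original pair, because two morphisms of exact triangles that agree on two of the three terms need not agree on the third in general, so I must additionally arrange the equaliser on the $E$-components — which I would do by a second application of filteredness of $\Subo{B}$ (Lemma~\ref{lem:Subo_filtered}) after composing, and again completing to a morphism of triangles.

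The main obstacle I anticipate is precisely this last point: because the diagonal map in the ``morphism of triangles'' is essentially unique only up to the usual ambiguity, equalising the legs in $\Wo{B}$ (or in $\Subo{B}$) does not automatically equalise the full morphism of triangles in $\Tria{B}$; one must equalise on both the $C$-component and the $E$-component, and then use (TR3)/octahedron once more to produce a single arrow of $\Tria{B}$ doing both. Organising this two-stage equalisation — first in $\Wo{B}$, then in $\Subo{B}$, each time re-completing to an exact triangle with cone in $\Sub$ — and checking that all the intermediate $\Sub$-weak equivalences stay in $\Qeq$ (via Lemma~\ref{lem:two_six} and the fact that $\Sub$ is closed under cones of maps between its objects) is the technical heart of the argument; the rest is routine diagram-chasing.
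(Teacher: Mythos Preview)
Your proposal is correct and follows essentially the same strategy as the paper: (F1) is handled by dominating in \(\Wo{B}\) and completing to a morphism of triangles via (TR3), and (F2) is handled by equalising separately in \(\Wo{B}\) and in \(\Subo{B}\), each time lifting to \(\Tria{B}\). The only cosmetic difference is that the paper phrases the combination of the two equalisers as an appeal to the already-established (F1), whereas you describe it as a sequential composition (equalise the \(C\)-leg, compose, then equalise the \(E\)-leg); your sequential formulation is in fact the cleaner way to see why a \emph{single} morphism in \(\Tria{B}\) simultaneously equalises both components, and no extra octahedral step is needed at the end---once you compose the \(\Subo{B}\)-lift after the \(\Wo{B}\)-lift, equality on the \(C\)-component is automatically preserved.
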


\begin{proof}
  First we check~(F1).  Consider two exact triangles
  \[
  \Delta \defeq \bigl(E \to B\xrightarrow{s} C\to E[1]\bigr),\qquad
  \Delta' \defeq \bigl(E' \to B\xrightarrow{s'} C'\to E'[1]\bigr)
  \]
  with \(E,E'\inOb\Sub\) and \(s,s'\in\Qeq\).  (LF2) yields a commuting diagram
  \[
  \xymatrix{B\ar[r]^{s'}\ar[d]^s&
    C'\ar[d]^{t}\\
    C\ar[r]^g &C''}
  \]
  with \(t\in\Qeq\).  Then \(ts'\in\Qeq\) as well.  We embed~\(ts'\) in an exact triangle
  \[
  \Delta'' \defeq \bigl(E'' \to B\xrightarrow{ts'} C''\to E''[1]\bigr);
  \]
  this yields an object of~\(\Tria{B}\).  The axioms of a triangulated
  category yield morphisms of triangles
  \[
  \xymatrix{E\ar[r]\ar[d]&B\ar[r]^s\ar@{=}[d]&C\ar[r]\ar[d]^g&E[1]\ar[d]\\
    E''\ar[r]&B\ar[r]^{ts'}&C''\ar[r]&E''[1],}
  \qquad
  \xymatrix{E'\ar[r]\ar[d]&B\ar[r]^{s'}\ar@{=}[d]&C'\ar[r]\ar[d]^t&E'[1]\ar[d]\\
    E''\ar[r]&B\ar[r]^{ts'}&C''\ar[r]&E''[1].}
  \]
  Thus~\(\Delta''\) dominates both \(\Delta\) and~\(\Delta'\) in~\(\Tria{B}\), verifying~(F1).

  Next we construct equalisers for parallel arrows
  in~\(\Tria{B}\):
  \begin{equation}
    \begin{gathered}
      \xymatrix{
        E\ar[r]^g\ar@<-0.5ex>[d]_-{\varepsilon_1} \ar@<0.5ex>[d]^-{\varepsilon_2}\ar[r]&
        B\ar[r]^s\ar@{=}[d]&
        C\ar[r]^f\ar@<-0.5ex>[d]_-{\gamma_1}\ar@<0.5ex>[d]^-{\gamma_2}&
        E[1]\ar@<-0.5ex>[d] \ar@<0.5ex>[d]\\
        E'\ar[r]^{g'}&
        B\ar[r]^{s'}&
        C'\ar[r]^{f'}&E'[1].
      }
    \end{gathered}
  \end{equation}
  Since the category~\(\Wo{B}\) is filtered, it is easy to equalise \(\gamma_1\) and~\(\gamma_2\) by a morphism in~\(\Wo{B}\).  This lifts to a morphism in~\(\Tria{B}\) that equalises \(\gamma_1\) and~\(\gamma_2\).  Since the category~\(\Subo{B}\) is filtered as well by Proposition~\ref{lem:Subo_filtered}, we may equalise \(\varepsilon_1\) and~\(\varepsilon_2\) by a morphism in~\(\Subo{B}\), which once again lifts to a morphism in~\(\Tria{B}\) that still equalises \(\varepsilon_1\) and~\(\varepsilon_2\).  By~(F1), we may equalise both at the same time, that is, we get a commuting diagram
  \[
  \xymatrix{
    E\ar[r]^g\ar@<-0.5ex>[d]_-{\varepsilon_1} \ar@<0.5ex>[d]^-{\varepsilon_2}\ar[r]&
    B\ar[r]^s\ar@{=}[d]&
    C\ar[r]^f\ar@<-0.5ex>[d]_-{\gamma_1}\ar@<0.5ex>[d]^-{\gamma_2}&
    E[1]\ar@<-0.5ex>[d] \ar@<0.5ex>[d]\\
    E'\ar[r]^{g'}\ar[d]^{\varepsilon_3}&
    B\ar[r]^{s'}\ar@{=}[d]&
    C'\ar[r]^{f'}\ar[d]^{\gamma_3}&E'[1]\ar[d]\\
    E''\ar[r]^{g'}&
    B\ar[r]^{s'}&
    C''\ar[r]^{f'}&E''[1]
  }
  \]
  with \(\gamma_3\gamma_1=\gamma_3\gamma_2\) and \(\varepsilon_3\varepsilon_1=\varepsilon_3\varepsilon_2\).  Thus~\(\Tria{B}\) satisfies~(F2).
\end{proof}

Since the forgetful functors from~\(\Tria{B}\) to \(\Wo{B}\) and~\(\Subo{B}\) are surjective both on objects and arrows, they preserve colimits.  That is, we may rewrite localisations and colocalisations as colimits over~\(\Tria{B}\).

\begin{thm}
  \label{thm:localisation_colocalisation_sequence}
  Let~\(\Tri\) be a triangulated category and~\(\Sub\) a thick subcategory.  Let \(F\colon \Tri\to \Ab\) be a homological functor to the category of Abelian groups.  Then there is a natural exact sequence
  \[
  \dotsb
  \to \Rightco F_1(B)
  \to  F_1(B)
  \to \Right F_1(B)
  \to \Rightco F_0(B)
  \to  F_0(B)
  \to \Right F_0(B)
  \to \dotsb
  \]
\end{thm}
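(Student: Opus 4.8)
The plan is to realise all three functors $\Rightco F$, $F$, and $\Right F$ as colimits over the single filtered category $\Tria{B}$, using the remark immediately preceding the theorem that the forgetful functors $\Tria{B}\to\Wo{B}$ and $\Tria{B}\to\Subo{B}$ are surjective on objects and arrows, hence cofinal, so that
\[
\Rightco F(B) \cong \varinjlim_{\Delta\in\Tria{B}} F(C_\Delta[-1]), \qquad
\Right F(B) \cong \varinjlim_{\Delta\in\Tria{B}} F(C_\Delta),
\]
where I write a typical object of $\Tria{B}$ as $\Delta = \bigl(E_\Delta \to B\xrightarrow{s_\Delta} C_\Delta\to E_\Delta[1]\bigr)$; note $E_\Delta$ is, up to canonical isomorphism, $C_\Delta[-1]$, so $\Rightco F(B)$ is the colimit of $F(E_\Delta)$. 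The constant diagram with value $F(B)$ also sits over $\Tria{B}$, and for each fixed $\Delta$ the exact triangle defining $\Delta$ gives, via the homological functor $F$, a long exact sequence
\[
\dotsb \to F_n(E_\Delta) \to F_n(B) \to F_n(C_\Delta) \to F_{n-1}(E_\Delta) \to \dotsb,
\]
where I have used $F(C_\Delta[-1])\cong F_1(C_\Delta)$ etc.\ to re-index; the connecting maps here are the maps $E_\Delta[1]\to C_\Delta[1]$ shifted, i.e.\ they come from the triangle. These long exact sequences are natural in $\Delta$ because a morphism in $\Tria{B}$ is by definition a morphism of the underlying triangles that is the identity on $B$.

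The key step is then: a filtered colimit of long exact sequences of abelian groups is a long exact sequence. Concretely, I would take the colimit over $\Delta\in\Tria{B}$ of the system of long exact sequences above. Since $\Tria{B}$ is filtered by Proposition~\ref{prop:category_triangles_filtered}, and since filtered colimits of abelian groups are exact, the colimit is again exact. The colimit of the term $F_n(E_\Delta)$ is $\Rightco F_n(B)$; the colimit of the constant term $F_n(B)$ is just $F_n(B)$ (a filtered colimit over a nonempty connected index category of a constant diagram is the constant value — $\Tria{B}$ is nonempty since $\id_B$ gives a trivial triangle, and connected since it is filtered); and the colimit of $F_n(C_\Delta)$ is $\Right F_n(B)$. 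This produces exactly the asserted exact sequence
\[
\dotsb \to \Rightco F_1(B) \to F_1(B) \to \Right F_1(B) \to \Rightco F_0(B) \to F_0(B) \to \Right F_0(B) \to \dotsb .
\]
I would also check that the three maps in each period are the expected ones: the map $\Rightco F_*(B)\to F_*(B)$ is the canonical one from Definition~\ref{def:colocalisation_functor} (induced by $s_\Delta\colon E_\Delta\to B$, which is the first map of the triangle), the map $F_*(B)\to\Right F_*(B)$ is the canonical one from Definition~\ref{def:localisation_functor} (induced by $s_\Delta\colon B\to C_\Delta$), and the connecting map $\Right F_n(B)\to\Rightco F_{n-1}(B)$ is the colimit of the triangle connecting maps $F_n(C_\Delta)\to F_{n-1}(E_\Delta)$.

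The main obstacle I anticipate is bookkeeping rather than conceptual: making sure the long exact sequence attached to $\Delta$ is genuinely functorial in $\Delta$ — in particular that the identification $E_\Delta \cong C_\Delta[-1]$ used to express $\Rightco F$ via the $C_\Delta$'s is compatible with morphisms of triangles — and being careful that the forgetful functors really are cofinal (not merely surjective on objects and arrows) so that the colimits genuinely agree; cofinality follows because each forgetful functor is surjective on objects and arrows and $\Tria{B}$ is filtered, but this should be stated. A secondary point worth a sentence is naturality in $B$ and in $F$: a morphism $B\to B'$ and a natural transformation $F\Rightarrow F'$ both induce compatible maps of the diagrams of long exact sequences over the (functorially varying) categories $\Tria{B}$, hence a chain map of the resulting sequences, which gives the asserted naturality and, specialised to $F = \Tri(A,\blank)$, recovers the sequence~\eqref{eq:localisation_exact_sequence}.
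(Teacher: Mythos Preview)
Your approach is the same as the paper's: for each object $\Delta$ of $\Tria{B}$ apply $F$ to the underlying exact triangle to obtain a long exact sequence, observe that this is functorial in~$\Delta$, and take the filtered colimit over $\Tria{B}$; the three terms are then identified with $\Rightco F(B)$, $F(B)$, and $\Right F(B)$ exactly as you describe.

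One claim, however, is wrong and should be removed. In an exact triangle $E_\Delta \to B \to C_\Delta \to E_\Delta[1]$ there is \emph{no} canonical isomorphism $E_\Delta \cong C_\Delta[-1]$; that would force $B\cong 0$. Correspondingly your displayed formula $\Rightco F(B) \cong \varinjlim_{\Delta} F(C_\Delta[-1])$ is false --- that colimit is $\Right F_{-1}(B)$, not $\Rightco F(B)$. The correct identification is the one you also state and actually use in the body of the argument, namely $\Rightco F(B) \cong \varinjlim_{\Delta} F(E_\Delta)$, which comes directly from cofinality of the forgetful functor $\Tria{B}\to\Subo{B}$. Delete the $C_\Delta[-1]$ formula and the sentence asserting $E_\Delta \cong C_\Delta[-1]$ (and the corresponding worry in your ``obstacles'' paragraph); the remainder of the argument is correct and coincides with the paper's proof.
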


This is the exact sequence~\eqref{eq:derived_exact_sequence} promised in the introduction.

\begin{proof}
  For each object \(E\to B\to C\to E[1]\) of~\(\Tria{B}\), we get a long exact sequence
  \begin{equation}
    \label{eq:derived_exact_sequence_2}
    \dotsb \to F_1(E) \to F_1(B) \to F_1(C) \to F_0(E) \to F_0(B) \to F_0(C) \to \dotsb.
  \end{equation}
  This is a functor from \(\Tria{B}\) to the category of exact chain complexes in~\(\Ab\).  Since~\(\Tria{B}\) is filtered, the colimit of this diagram of chain complexes is again an exact chain complex.

  Since the forgetful functors from \(\Tria{B}\) to \(\Wo{B}\) and \(\Subo{B}\) are surjective on objects and arrows, we have
  \[
  \Right F(B) \cong \varinjlim_{\Tria{B}} F(C),\qquad
  \Rightco F(B) \cong \varinjlim_{\Tria{B}} F(E).
  \]
  Since~\(\Tria{B}\) is filtered, the colimit of the constant diagram~\(F(B)\) on~\(\Tria{B}\) is \(F(B)\).  Hence the colimit of the exact sequences~\eqref{eq:derived_exact_sequence_2} is the desired exact sequence~\eqref{eq:derived_exact_sequence}.
\end{proof}

Theorem~\ref{thm:localisation_colocalisation_sequence} for the functor \(\Tri(A,\blank)\) implies the exact sequence~\eqref{eq:localisation_exact_sequence}.

When we apply Theorem~\ref{thm:localisation_colocalisation_sequence} to a homological functor \(\Tri^\op\to\Ab\), we get the exact sequence~\eqref{eq:derived_exact_sequence} for a cohomological functor \(\Tri\to\Ab\).

\begin{cor}
  \label{cor:local_colocal_criterion}
  A homological or cohomological functor~\(F\) is local if and only if \(\Rightco F\cong0\), and~\(F\) is colocal if and only if \(\Right F\cong0\).
\end{cor}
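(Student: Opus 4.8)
The plan is to deduce both equivalences formally from the natural long exact sequence of Theorem~\ref{thm:localisation_colocalisation_sequence}, whose fragment around degree~$n$ reads
\[
\Right F_{n+1}(B)\to\Rightco F_n(B)\to F_n(B)\to\Right F_n(B)\to\Rightco F_{n-1}(B)\to F_{n-1}(B).
\]
A cohomological functor $\Tri\to\Ab$ is a homological functor on~$\Tri^\op$, and its localisation, colocalisation, and the properties ``local'' and ``colocal'' are all defined by passing to~$\Tri^\op$; hence the cohomological case follows at once from the homological one, and I may assume that $F$ is homological. Moreover, since $\Sub$ and $\Qeq$ are closed under suspension, ``$\Right F\cong0$'' means precisely that $\Right F_n(B)=0$ for all $n$ and all $B\inOb\Tri$, and similarly for~$\Rightco F$; so it suffices to argue degree- and object-wise.

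For the colocality criterion: if $\Right F\cong0$, the fragment above has vanishing outer terms, so exactness makes $\Rightco F_n(B)\to F_n(B)$ an isomorphism for every $n$ and~$B$; thus $\Rightco F\Rightarrow F$ is invertible and $F$ is colocal. Conversely, suppose $F$ is colocal. Then $\Rightco F_n(B)\to F_n(B)$ is onto, so by exactness at $F_n(B)$ the map $F_n(B)\to\Right F_n(B)$ is zero; exactness at $\Right F_n(B)$ then makes $\Right F_n(B)\to\Rightco F_{n-1}(B)$ injective; and exactness at $\Rightco F_{n-1}(B)$, together with the injectivity of the isomorphism $\Rightco F_{n-1}(B)\to F_{n-1}(B)$, shows that this injective map has trivial image, whence $\Right F_n(B)=0$ for all $n,B$, i.e.\ $\Right F\cong0$.

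The locality criterion is the mirror image. If $\Rightco F\cong0$, exactness collapses the sequence into isomorphisms $F_n(B)\to\Right F_n(B)$, so $F$ is local. Conversely, if $F$ is local, injectivity of $F_n(B)\to\Right F_n(B)$ with exactness at $F_n(B)$ forces $\Rightco F_n(B)\to F_n(B)$ to vanish, surjectivity of $F_{n+1}(B)\to\Right F_{n+1}(B)$ with exactness at $\Right F_{n+1}(B)$ forces $\Right F_{n+1}(B)\to\Rightco F_n(B)$ to vanish, and exactness at $\Rightco F_n(B)$ then gives $\Rightco F_n(B)=0$.

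There is no real obstacle here; the only points to watch are the bookkeeping of the suspension degree and the reduction of the cohomological case to the homological one, both routine once the exact sequence of Theorem~\ref{thm:localisation_colocalisation_sequence} is in hand.
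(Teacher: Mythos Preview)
Your argument is correct and is precisely the deduction the paper has in mind: the corollary is stated without proof immediately after Theorem~\ref{thm:localisation_colocalisation_sequence}, and your diagram chase through the long exact sequence is the intended justification. One could shorten the ``only if'' direction for locality by noting that Proposition~\ref{pro:local_functors}(b) gives \(F|_\Sub=0\), whence \(\Rightco F(B)=\varinjlim_{C\in\Sub} F(C)=0\) directly from Definition~\ref{def:colocalisation_functor}; but your uniform treatment via the exact sequence is equally valid and matches the paper's placement of the result.
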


\begin{cor}
  \label{cor:invertibility_local_colocal}
  A natural transformation \(\Phi\colon F\Rightarrow F'\) between two homological or cohomological functors is invertible if and only if both its localisation \(\Right\Phi\colon \Right F\Rightarrow \Right F'\) and colocalisation \(\Rightco\Phi\colon \Rightco F\Rightarrow \Rightco F'\) are invertible.
\end{cor}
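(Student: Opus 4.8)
The plan is to deduce this from the naturality of the exact sequence in Theorem~\ref{thm:localisation_colocalisation_sequence} together with the five lemma. First I would observe that one direction is trivial: if $\Phi$ is invertible, then $\Right\Phi$ and $\Rightco\Phi$ are invertible because $\Right$ and $\Rightco$ are functorial on natural transformations (noted just before Theorem~\ref{the:colocalisation_homological} and again after Definition~\ref{def:colocalisation_functor}). For the converse, suppose $\Right\Phi$ and $\Rightco\Phi$ are both invertible. Apply Theorem~\ref{thm:localisation_colocalisation_sequence} to $F$ and to $F'$, and use the asserted naturality of the sequence in $F$: the natural transformation $\Phi\colon F\Rightarrow F'$ induces a morphism between the two long exact sequences, commuting with all the maps $\Rightco F_n\to F_n\to\Right F_n\to\Rightco F_{n-1}$.

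The key step is then a diagram chase. For each object $B$ and each degree $n$, we have a ladder of abelian groups
\[
\xymatrix@C-1em{
\Right F_{n+1}(B)\ar[r]\ar[d]^{\Right\Phi}&
\Rightco F_n(B)\ar[r]\ar[d]^{\Rightco\Phi}&
F_n(B)\ar[r]\ar[d]^{\Phi}&
\Right F_n(B)\ar[r]\ar[d]^{\Right\Phi}&
\Rightco F_{n-1}(B)\ar[d]^{\Rightco\Phi}\\
\Right F'_{n+1}(B)\ar[r]&
\Rightco F'_n(B)\ar[r]&
F'_n(B)\ar[r]&
\Right F'_n(B)\ar[r]&
\Rightco F'_{n-1}(B)
}
\]
with exact rows. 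The outer two vertical maps are $\Right\Phi$ or $\Rightco\Phi$ evaluated in various degrees, hence isomorphisms by hypothesis; the second and fourth are likewise isomorphisms. The five lemma then forces the middle map $\Phi_B\colon F_n(B)\to F'_n(B)$ to be an isomorphism. Since this holds for every $B$ and every $n$, and since a natural transformation of functors to $\Ab$ is invertible precisely when it is invertible on every object, $\Phi$ is invertible. The cohomological case follows by applying the homological case to the functors $\Tri^\op\to\Ab$, as explained after Theorem~\ref{thm:localisation_colocalisation_sequence}.

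I do not expect a genuine obstacle here; the only point requiring a little care is that the five lemma needs four flanking isomorphisms, so one must be sure to write down the ladder with the full periodic sequence (extending one term to the left and one to the right of the displayed middle) so that all four neighbours of $\Phi_B$ are indeed values of $\Right\Phi$ or $\Rightco\Phi$. Given the periodicity of~\eqref{eq:derived_exact_sequence}, this is automatic. One could alternatively phrase the argument via Corollary~\ref{cor:local_colocal_criterion} applied to the cone of $\Phi$, but the cone of a natural transformation of homological functors is not obviously homological, so the direct five-lemma argument is cleaner.
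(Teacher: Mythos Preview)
Your proposal is correct and matches the paper's own proof exactly: the paper simply says that the forward direction is clear and that the converse follows from the Five Lemma applied to the natural ladder coming from Theorem~\ref{thm:localisation_colocalisation_sequence}. You have spelled out precisely those details, including the care about having four flanking isomorphisms, so there is nothing to add.
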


\begin{proof}
  It is clear that invertibility of~\(\Phi\) implies invertibility of \(\Right\Phi\) and~\(\Rightco\Phi\).  The converse follows from the Five Lemma and Theorem~\ref{thm:localisation_colocalisation_sequence}.
\end{proof}

\begin{bibdiv}
\begin{biblist}
\bib{Gabriel-Zisman:Fractions}{book}{
  author={Gabriel, Peter},
  author={Zisman, Michel},
  title={Calculus of fractions and homotopy theory},
  series={Ergebnisse der Mathematik und ihrer Grenzgebiete},
  volume={35},
  publisher={Springer-Verlag},
  place={New York},
  date={1967},
  pages={x+168},
  review={\MRref {0210125}{35\,\#1019}},
}

\bib{Inassaridze-Kandelaki-Meyer:Finite_Torsion_KK}{article}{
  author={Inassaridze, Hvedri},
  author={Kandelaki, Tamaz},
  author={Meyer, Ralf},
  title={Finite and torsion \(\textup {KK}\)-theories},
  note={preprint},
  date={2009},
}

\bib{Meyer-Nest:BC}{article}{
  author={Meyer, Ralf},
  author={Nest, Ryszard},
  title={The Baum--Connes conjecture via localisation of categories},
  journal={Topology},
  volume={45},
  date={2006},
  number={2},
  pages={209--259},
  issn={0040-9383},
  review={\MRref {2193334}{2006k:19013}},
}

\bib{Neeman:Triangulated}{book}{
  author={Neeman, Amnon},
  title={Triangulated categories},
  series={Annals of Mathematics Studies},
  volume={148},
  publisher={Princeton University Press},
  place={Princeton, NJ},
  date={2001},
  pages={viii+449},
  isbn={0-691-08685-0},
  isbn={0-691-08686-9},
  review={\MRref {1812507}{2001k:18010}},
}

\end{biblist}
\end{bibdiv}
\end{document}